\newtheorem*{rep@theorem}{\rep@title}
\newcommand{\newreptheorem}[2]{%
\newenvironment{rep#1}[1]{%
 \def\rep@title{#2 \ref{##1}}%
 \begin{rep@theorem}}%
 {\end{rep@theorem}}}
\def\RR{\mathbb{R}}
\def\div{\mathrm {div}\,}
\def\supp{\mathrm {supp}\,}
\def\L{\mathcal L}
\def\pa{\partial}
\def\na{\nabla}
\def\eps{\varepsilon}
\def\vphi{\varphi}
\def\bO{\overline{\Omega}}
\def\dO{\pa\Omega}
\def\Dels{\big(-\Delta\big)^s}
\def\Leps{\mathcal{L}^\eps}
\def\le{\lambda^\eps}
\def\wpsi{\widetilde \psi}
\def\bF{F_1}
\def\d{\,{\rm{d}}}
\theoremstyle{plain}
\newtheorem{theorem}{Theorem}[section]
\newtheorem{lemma}{Lemma}[section]
\newtheorem{proposition}{Proposition}[section]
\newtheorem{definition}{Definition}[section]
\title{Fractional Diffusion limit of a kinetic equation with Diffusive boundary conditions in a bounded interval}
\date{}
\author{
L. Cesbron\thanks{Department of Mathematics, ETH Z\"urich, Switzerland}, 
A. Mellet\thanks{Department of Mathematics and CSCAMM, University of Maryland,  USA. Partially supported by NSF Grant DMS-2009236.}, 
M. Puel\thanks{Département de Mathématiques, CY Cergy Paris Université, France.}
} 
\begin{document}
\maketitle

\begin{abstract}
We investigate the fractional diffusion approximation of a kinetic equation set in a bounded interval with diffusive reflection conditions at the boundary.
In an appropriate singular limit corresponding to small Knudsen number and long time asymptotic, we show that the asymptotic density function is  the {\it unique solution} of a fractional diffusion equation with Neumann boundary condition.
This analysis completes a previous work by the same authors in which a limiting  fractional diffusion equation was identified on the half-space, 
but the uniqueness of the solution (which is necessary to prove the convergence of the whole sequence) could not be established.
\end{abstract}

\section{Introduction}
\subsection{The linear Boltzmann equation with diffusive boundary conditions}
In this paper, we investigate the fractional diffusion approximation of a linear kinetic equation set on a bounded domain with diffusive boundary conditions in dimension $1$.
Our starting point is the following kinetic equation, which models the evolution of a particle distribution function $f(t,x,v)\geq 0$ depending on the time $t>0$, the position $x\in\Omega\subset \RR$ and the velocity $v\in\RR$:
\begin{equation}\label{eq:kin0}
\begin{cases}
\displaystyle  \pa_t f   + v\, \pa_x f   = \nu_0 \left( \int_\RR f (t,x,w)\, dw \, F(v) - f \right)  
& \mbox{ in } \RR_+\times\Omega\times\RR \\[8pt]
 f  (0,x,v) = f_{in} (x,v) &\mbox{ in }\Omega\times\RR .
\end{cases}
\end{equation}
The left hand side of \eqref{eq:kin0} models the free transport of particles, whereas the operator in the right hand side models the diffusive and mass preserving interactions between the particles and the background.
For simplicity, we consider here the linear Boltzmann operator with constant collision frequency $\nu_0>0$ and equilibrium function $F(v)$.
Importantly, the function $F(v)$ is taken to be a given heavy-tail distribution function satisfying, for some $s\in(1/2,1)$ and $\gamma>0$:
\begin{equation}\label{def:F}
\left\{ \begin{aligned}
& F \in L^{\infty}(\RR), \quad \int F(v) \d v = 1, \quad F(v) = F(|v|)\geq 0 \\
& \Big| F(v) - \frac{\gamma}{|v|^{1+2s}} \Big| \leq \frac{C}{|v|^{1+4s}} \qquad \mbox{ for all } |v|\geq 1.
\end{aligned} \right. 
\end{equation}

Importantly, we consider here the case where  $\Omega$ is a bounded interval and we take (without loss of generality) $\Omega = (0,1)$. We denote $\Gamma_{\pm} = \{(x,v)\in\dO \, ;\, \pm n(x)\cdot  v>0\}$ (note that $\dO = \{ 0, 1\}$ and $n(0)=-1$, $n(1)=1$)
and define the traces $\gamma_\pm f = f|_{\Gamma_\pm}$.
With these notations, we consider the following diffusive reflection conditions on $\pa \Omega$:
\begin{equation} \label{eq:BCdiff}
\gamma_-f(t,x,v) = \mathcal B [\gamma_+ f] (t,x,v) \qquad \forall (x,v)\in \Sigma_-
\end{equation}
where $\mathcal B $ is the following scattering operator
\begin{align} \label{def:diffBC}
\mathcal{B}[\gamma_+ f^\eps ](t,x,v) = c_0 F(v) \int_{w\cdot n(x)>0} \gamma_+ f^\eps  (t,x,w) |w\cdot n(x)| \d w  
\end{align}
with $c_0$ the normalizing constant: 
\begin{align} \label{def:c0} 
c_0 :=  \left( \int_{w\cdot n(x)>0} F(w) |w\cdot n(x)| \d w \right)^{-1} .
\end{align}
The use of diffusive reflection conditions at the boundary is classical in kinetic theory. We are assuming that the 
boundary operator $\mathcal{B}$ involves the same equilibrium function $F$ as the bulk collision operator in order to avoid the need of boundary layer analysis. Note that we consider $s>1/2$ in order for the constant $c_0$ to be well-defined. 

\medskip

The diffusion approximation of such an equation is obtained by investigating the long time, small mean-free-path asymptotic behavior of $f$. To this end we introduce the Knudsen number $\eps\ll1$ and the following rescaling of \eqref{eq:kin0}-\eqref{eq:BCdiff}:
\begin{equation} \label{eq:rescaledKinetic} 
\left\{ \begin{aligned} 
& \eps^{2s}  \pa_t f^\eps  + \eps v\pa_x f^\eps  = \nu_0 \left( \int_\RR f^\eps(t,x,w)\, dw F(v) - f^\eps \right)  &\mbox{ in } \RR_+\times\Omega\times\RR \\
& f^\eps  (0,x,v) = f_{in} (x,v) &\mbox{ in }\Omega\times\RR \\[5pt]
& \gamma_- f^\eps  (t,x,v) = \mathcal{B}[\gamma_+ f^\eps ](t,x,v) &\mbox{ on } \RR_+\times
\Gamma_- 
\end{aligned} \right. 
\end{equation} 
We see that the particular choice of power of $\eps$ in front of the time derivative in \eqref{eq:rescaledKinetic} depends on the equilibrium $F$. When $\Omega$ is the whole line $\RR$ it has been proved (see for instance \cite{MelletMischlerMouhot11,Mellet10,AbdallahMelletPuel11,BouinMouhot20} and references therein) that as $\eps$ goes to $0$, $f^\eps$ converges to a function $f^0(t,x,v)= \rho(t,x)F(v)$
where $\rho(t,x)$ is the weak solution of a fractional diffusion equation $\pa_t \rho + \kappa \Dels \rho = 0$.

There is now a very significant literature devoted to the fractional diffusion approximation of kinetic equations. But the role of boundary conditions in these limits has only recently started to be investigated.
The case of Dirichlet boundary condition was studied in \cite{AcevesSchmeiser17}
and the case of specular reflection conditions was investigated by the first author in \cite{Cesbron18,Cesbron20}.
In \cite{CMP1}, we considered the case of diffusive reflection conditions \eqref{eq:BCdiff}
 in dimension $n \geq 1$ when $\Omega$ is the half space $\{x_n>0\}$. 
However, while this previous work clearly identified the limiting Neumann fractional diffusion equation  in $\Omega$ (see Section \ref{sec:As} below), we did not prove that the limiting density was the unique weak solution of that equation (given, for instance,  by Hille-Yoshida's theorem). We only established that it satisfies the equation in a weaker sense, for which uniqueness is not clear. As a result, we also did not prove the  convergence of the whole sequence $f^\eps$.
\medskip

The goal of this paper is to fill this gap in the simpler one-dimensional framework by proving that the limiting density is the unique weak solution of a Neumann fractional diffusion equation.
We achieve this by sharpening the assumptions on the test functions used to derive the limiting equation.
In addition, this paper provides  the first result of this type in a bounded domain.
Finally, we point out that while we focus here on the one-dimensional case, the proofs provide a roadmap for handling this problem in higher dimensions and in general convex domains.

\subsection{Weak solutions of \eqref{eq:rescaledKinetic}}
We now recall the standard definition of weak solutions for the kinetic equation with diffuse boundary condition.
First, we note that for any test function $\phi \in \mathcal{D}(\RR_+\times\overline \Omega \times\RR)$, smooth solutions of \eqref{eq:rescaledKinetic} satisfy:
\begin{align*}
&- \iiint_{ \RR_+\times  \Omega\times\RR} f^\eps \pa_t \phi \d t \d x \d v - \iint_{\Omega\times\RR}  f_{in}(x,v) \phi(0,x,v) \d x \d v   \\
&\qquad\qquad + \eps^{1-2s} \iint_{\RR_+\times\Gamma_+} \gamma_+ f^\eps  \left(\gamma_+ \phi - \mathcal{B}^*[\gamma_-\phi] \right) |v\cdot n(x)| \d t \d \sigma(x) \d v \\
&\qquad\qquad\qquad = \eps^{-2s} \iiint_{ \RR_+\times  \Omega\times\RR}  \big[ f^\eps \left( \eps v\pa_x \phi  -  \nu_0 \phi \right) + \nu_0 \rho^\eps F(v) \phi \big] \d t \d x \d v 
\end{align*}
with 
\begin{align} \label{def:diffBC*} 
\mathcal{B}^*[\gamma_-\phi] (t,x) = c_0 \int_{w\cdot n(x)<0} \gamma_-\phi(t,x,w)F(w) |w\cdot n(x)| \d w .
\end{align}
and $\rho^\eps(t,x) = \int_\RR f^\eps(t,x,v) \d v $. Note that $\mathcal{B}^*$ does not depend on $v$ 
because of the simple form of diffuse reflection operator we consider here (constant cross-section).
We then have:
\begin{definition}\label{def:weak}
We say that $f^\eps  \in L^2_{F^{-1}}(\RR_+\times\Omega\times\RR)$ is a weak solution to \eqref{eq:rescaledKinetic}  if for every test function $\phi$ such that $\phi$, $\pa_t \phi$ and $v\pa_x \phi$ are in $L^2_F (\RR_+\times\Omega\times\RR)$ and $\phi$ satisfies the dual boundary condition
\begin{align*}
\gamma_+ \phi = \mathcal{B}^*[\gamma_- \phi]  \quad \mbox{ on } \RR_+\times\RR_+
\end{align*}
we have 
\begin{equation} \label{eq:weakformulation} 
\begin{aligned} 
& \iiint_{ \RR_+\times \Omega\times\RR}  f^\eps \pa_t \phi \d t \d x \d v + \iint_{\Omega\times\RR}  f_{in}(x,v) \phi(0,x,v) \d x \d v   \\
&\qquad \qquad = - \eps^{-2s} \iiint_{ \RR_+\times \Omega\times\RR}  \big[ f^\eps \left( \eps v\pa_x \phi  -  \nu_0 \phi \right) + \nu_0 \rho^\eps F(v) \phi \big] \d t \d x \d v 
\end{aligned} 
\end{equation} 
\end{definition}
Here and in the rest of the paper, we used the notation
$$ L^2_{F^{-1}} ((0,\infty)\times \Omega\times\RR^N)= \left\{f(t,x,v) \, ;\, \int_0^\infty\int_\Omega\int_{\RR^N} |f(t,x,v)|^2 \frac{1}{F(v)}\, dv\, dx\, dt<\infty\right\}$$
and a similar definition for $L^2_{F}((0,\infty)\times \Omega\times\RR^N)$.

The existence of a weak solution in the sense of this definition is discussed, for instance, in \cite{CIP,MM}.

\subsection{The asymptotic diffusion equation} \label{sec:As}
In this section, we recall previous results (in particular our result of \cite{CMP1}) and introduce the asymptotic model.

As already mentioned above, it is now classical that when $\Omega $ is the whole line $\RR$ (or more generally $\Omega=\RR^n$), $f^\eps$ converges to a function
$ \rho(t,x)F(v)$
where $\rho(t,x)$ is the weak solution of a fractional diffusion equation $\pa_t \rho + \kappa \Dels \rho = 0 $.
When $\Omega$ is a subset of $\RR^n$, the diffusion equation must be supplemented by boundary condition. Studying the asymptotic limit of this kinetic equation provides us with the framework to find out physically relevant boundary conditions for fractional diffusion equations. We recall that in the classical diffusion approximation (e.g. when $F$ is a Maxwellian distribution) the limiting equation is the diffusion equation with Neumann boundary conditions.

In \cite{CMP1}, we study the problem \eqref{eq:rescaledKinetic} in dimension $n\geq 1$ when $\Omega$ is the upper half plane. We show that the asymptotic operator (which we denote by $(-\Delta)_N$ since it corresponds to Neumann boundary conditions)  is given by
$$ (-\Delta)_N^s u (x) := - \frac{c_{n,s}}{2s}\int_\Omega \na u(y) \cdot\frac{y-x}{|x-y|^{n+2s}}\, dy,$$
with $c_{n,s} =  \frac{2^{2s} \Gamma\left(\frac{n}{2}+s \right)}{\pi^{n/2} |\Gamma\left(-s \right)|} $ (the constant is chosen here so that when $\Omega = \RR^n$, we recover $(-\Delta)_N^s = (-\Delta)^s$)
which can also be written in divergence form as 
$$ (-\Delta)_N^s u (x) = -\div D_N^{2s-1}[u], \qquad D_N^{2s-1}[u](x) :=  \frac{c_{n,s}}{2s(2s-1)}  \int_\Omega (y-x)\cdot \na u(y)  \frac{y-x}{|y-x|^{n+2s}} \, dy.$$
With these notations, the main result of \cite{CMP1} is:
\begin{theorem}[\cite{CMP1},Theorem 1.1] \label{thm:CMP}
Assume that $F$ satisfies \eqref{def:F} with $s\in(1/2,1)$ and let $\Omega$ be the upper half space 
$ \Omega = \{x\in \RR^n\,;\, x_n >0\}.$
Assume that $f^\eps(t,x,v)$ is a weak solution of \eqref{eq:rescaledKinetic}  in $(0,\infty)\times \Omega\times\RR^n$.

There exists a subsequence $f^{\eps'}$ which converges weakly in $L^\infty(0,\infty; L^2_{F^{-1}} (\Omega\times\RR^n))$  to the function $\rho(t,x) F(v)$ where $\rho(t,x)$ satisfies
\begin{align} \label{eq:weakFracNeumann0}
 \iint_{\RR^+\times\Omega} &\rho(t,x) \Big(  \pa_t \psi(t,x) + \kappa(-\Delta)_N^s  [\psi](t,x) \Big) \d t\d x + \int_\Omega\rho_{in} (x) \psi(0,x) \d x =0 
\end{align}
for all test function $\psi\in C^{1}(0,\infty;H^2(\Omega))$, such that $  (-\Delta)_N^s [\psi] \in L^{2}(\RR_+\times\Omega)$ and  $ D^{2s-1} [\psi] \cdot n =0 $ on $\pa\Omega$.
\end{theorem}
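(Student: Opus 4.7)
My plan is to prove Theorem \ref{thm:CMP} by the classical ``moment + auxiliary test function'' strategy tailored to heavy-tailed equilibria, the difficulty being that the test function must carry the information of the diffusive boundary condition.

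\textbf{Step 1: A priori estimates.} Multiply the rescaled kinetic equation \eqref{eq:rescaledKinetic} by $f^\eps/F(v)$ and integrate over $\Omega \times \RR^n$. The bulk collision operator yields the usual H-theorem dissipation $\frac{\nu_0}{\eps^{2s}}\iint (f^\eps-\rho^\eps F)^2 / F \,dx\,dv$. For the boundary terms, writing the in-flux contribution through $\mathcal{B}[\gamma_+f^\eps]^2/F$ and using Cauchy--Schwarz with weight $F(v)|v\cdot n|$ together with the normalisation \eqref{def:c0} gives
\[
\int_{\Gamma_-}\frac{(\gamma_-f^\eps)^2}{F}|v\cdot n|\,d\sigma\,dv \leq \int_{\Gamma_+}\frac{(\gamma_+f^\eps)^2}{F}|v\cdot n|\,d\sigma\,dv,
\]
so that the boundary contribution is non-positive. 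Consequently $f^\eps$ is uniformly bounded in $L^\infty_t L^2_{F^{-1}}(\Omega\times\RR^n)$ and $\eps^{-s}(f^\eps-\rho^\eps F)$ is bounded in $L^2_{t,x} L^2_{F^{-1}}$.

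\textbf{Step 2: Weak limit.} Extract a subsequence with $f^\eps \rightharpoonup f^0$ weakly-$*$ in $L^\infty_t L^2_{F^{-1}}$. The dissipation estimate forces $f^0(t,x,v)=\rho(t,x)F(v)$ with $\rho^\eps \rightharpoonup \rho$ in the appropriate sense.

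\textbf{Step 3: Construction of an auxiliary test function.} Given $\psi \in C^1_c([0,\infty); H^2(\Omega))$ with $D_N^{2s-1}[\psi]\cdot n = 0$ on $\partial\Omega$, I would construct $\phi^\eps(t,x,v)$ as the solution of the dual stationary transport problem
\[
\nu_0 \phi^\eps - \eps v\cdot\nabla_x \phi^\eps = \nu_0 \psi \quad \text{in }\Omega\times\RR^n,\qquad \gamma_+\phi^\eps = \mathcal{B}^*[\gamma_-\phi^\eps] \text{ on }\Gamma_+,
\]
solved by the characteristics formula
\[
\phi^\eps(t,x,v) = \int_0^{\tau^-(x,v)/\eps} \psi(t,x-\eps v s)\,\nu_0 e^{-\nu_0 s}\,ds \;+\; e^{-\nu_0 \tau^-(x,v)/\eps}\,\gamma_-\phi^\eps(t,x-v\tau^-,v),
\]
where $\tau^-(x,v)$ is the backward exit time. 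The dual boundary condition is then imposed by a fixed-point/Neumann series in the operator $\mathcal{B}^*$ composed with the free transport semigroup (contractive because of \eqref{def:c0}).

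\textbf{Step 4: Passage to the limit.} Using $\phi^\eps$ as test function in \eqref{eq:weakformulation}, the right-hand side collapses (by construction) to
\[
\frac{\nu_0}{\eps^{2s}}\iiint_{\RR_+\times\Omega\times\RR^n} \rho^\eps(t,x) F(v)\bigl(\psi(t,x)-\phi^\eps(t,x,v)\bigr)\,dt\,dx\,dv \;+\; o(1),
\]
the remainder being controlled by the $L^2$-dissipation from Step 1. The core asymptotic computation is then
\[
\frac{1}{\eps^{2s}}\int_{\RR^n} F(v)\bigl(\psi(x)-\phi^\eps(x,v)\bigr)\,dv \;\longrightarrow\; \kappa(-\Delta)_N^s\psi(x),
\]
obtained by the change of variables $w=\eps v$, exploiting the tail $F(v)\sim \gamma |v|^{-1-2s}$ from \eqref{def:F} and writing $\psi-\phi^\eps$ as an integral of $\nabla\psi$ along the characteristic that exits the domain. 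The boundary integration by parts that produces a normal flux on $\partial\Omega$ vanishes precisely because $D_N^{2s-1}[\psi]\cdot n=0$, turning the limit into the divergence form of $(-\Delta)_N^s$.

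\textbf{Main obstacle.} The technical heart of the argument lies entirely in Step 3--4: controlling $\phi^\eps$ and $\psi-\phi^\eps$ \emph{uniformly up to the boundary}, where the characteristics become grazing and the diffusive reflection mixes all incoming and outgoing velocities. Showing that the second (reflected) term in the characteristic formula does not destroy the asymptotic $O(\eps^{2s})$ expansion, and that the resulting limit is the divergence form with the Neumann-compatibility condition, is where the delicate analysis concentrates. This is precisely where the present paper must sharpen the class of admissible $\psi$ to recover uniqueness, which was the missing ingredient in \cite{CMP1}.
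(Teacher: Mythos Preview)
Note first that Theorem~\ref{thm:CMP} is quoted from \cite{CMP1} and not proved in the present paper; the paper proves the sharper one-dimensional Theorem~\ref{thm:main}, following the same architecture as \cite{CMP1}. I compare your outline to that proof.

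Steps~1--2 and the overall moment/auxiliary-test-function strategy match. The substantive divergence is Step~3. You propose to solve the dual transport problem with diffusive boundary condition \eqref{eq:aux0} directly, via characteristics up to the exit time plus a Neumann series in $\mathcal B^*$. The paper explicitly avoids this: it first solves the \emph{simpler} problem \eqref{eq:aux1} with boundary data $\gamma_+\phi=\psi$, by extending $\psi$ by a constant to $\tilde\psi$ on the whole space and setting $\phi(x,v)=\int_0^\infty\nu_0 e^{-\nu_0 z}\tilde\psi(x+\eps z v)\,dz$. This $\phi$ satisfies the diffusive dual condition $\gamma_+\phi=\mathcal B^*[\gamma_-\phi]$ exactly if and only if an $\eps$-dependent flux condition $D^{2s-1}_\eps[\psi]\cdot n=0$ holds on $\partial\Omega$ (Proposition~\ref{prop:phieps}); since $\psi$ does not satisfy it, one replaces $\psi$ by $\psi^\eps=\psi+\lambda^\eps_0\chi+\dots$ with explicit scalars $\lambda^\eps_i\to0$ chosen so that $\psi^\eps$ does (Proposition~\ref{prop:psieps}).

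This is not a cosmetic difference. The extension-plus-correction trick yields a fully explicit $\phi^\eps$, so Step~4 reduces to kernel estimates (Lemma~\ref{lem:estimatesFi}) and the convergences of Lemmas~\ref{lem:DepsCV}--\ref{lem:L}; in particular, after inserting $\phi^\eps$ into \eqref{eq:weakformulation} the right-hand side collapses \emph{identically} to $-\iint\rho^\eps\,\mathcal L^\eps[\psi^\eps]\,dt\,dx$ with no $o(1)$ remainder (see \eqref{eq:weakeps}), so the dissipation estimate plays no role there. Your ``main obstacle'' is real and is precisely what this device circumvents: in your formulation the Neumann series is delicate because $\mathcal B^*$ has norm~$1$ (it preserves constants) and the damping $e^{-\nu_0\tau^-/\eps}$ degenerates along grazing trajectories, so the uniform control you need is not immediate. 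Finally, the role of the hypothesis $D_N^{2s-1}[\psi]\cdot n=0$ in the paper's argument is only to guarantee $\lambda^\eps_i\to0$ via Lemma~\ref{lem:DepsCV}, not to kill a boundary flux after an integration by parts as you describe.
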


By using the integration by parts formula (see Proposition 3.4 in \cite{CMP1}): 
\begin{equation}\label{eq:symmetry}
\int_{\Omega} \div D^{2s-1}[\vphi] \psi \d x - \int_{\Omega} \vphi \, \div D^{2s-1} [\psi]\d x  = \int_{\pa \Omega} \left[\psi D^{2s-1}[\vphi]\cdot n -
 \vphi D^{2s-1}[\psi]\cdot n \right]\d S(x)
\end{equation}
we see that  \eqref{eq:weakFracNeumann0} is a natural weak formulation for the parabolic boundary value problem
\begin{equation} \label{eq:fracNeumann}
\left\{ \begin{aligned}
& \pa_t \rho - \kappa \, \div D_N^{2s-1} [\rho] = 0 \qquad \mbox{ in } (0,\infty)\times \Omega\\
& D_N^{2s-1} [\rho] \cdot n = 0 \qquad \mbox{ in } (0,\infty)\times\pa  \Omega\\
& \rho(0,x) = \rho_{in}(x) \qquad \mbox{ in } \Omega.
\end{aligned} \right. 
\end{equation}
Using Hille-Yoshida's theorem, we prove in \cite{CMP1} that \eqref{eq:fracNeumann}  is well posed:
\begin{theorem}[\cite{CMP1},Theorem 1.2]\label{thm:evolution}
For all $\rho_{in}\in L^2(\Omega)$ and $\kappa>0$, the evolution problem
\begin{equation}\label{eq:evolution}
\left\{
\begin{array}{ll}
\pa_t \rho - \kappa \, \div D_N^{2s-1}[\rho]=0 & \mbox{ in } (0,\infty)\times \Omega,\\
\rho(0,x) = \rho_{in}(x) & \mbox{ in } \Omega.
\end{array}
\right.
\end{equation}
has a unique  solution $\rho \in C^0([0,\infty);L^2(\Omega))\cap C^1((0,\infty);L^2(\Omega))\cap C^0((0,\infty);D((-\Delta)_N^s))$ where
$$
D((-\Delta)_N^s)=\{u\in H^s(\Omega)\,;\, (-\Delta)_N^s u \in L^2(\Omega), \quad D^{2s-1}[u]\cdot n=0\mbox{ on } \pa\Omega \}.
$$
\end{theorem}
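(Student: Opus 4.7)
My plan is to apply the Hille--Yoshida theorem (in its Lumer--Phillips form) to the operator $A := -\kappa(-\Delta)_N^s$ on $L^2(\Omega)$ with domain $D((-\Delta)_N^s)$, and then upgrade the regularity from the generic $C^0([0,\infty);L^2)$ to the stated regularity by exploiting the self-adjointness of $A$. Indeed, once $A$ is shown to be self-adjoint and non-positive, the semigroup it generates is automatically analytic, which produces the instantaneous smoothing $\rho(t)\in D((-\Delta)_N^s)$ for every $t>0$ and arbitrary $\rho_{in}\in L^2(\Omega)$.

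The first step is dissipativity. For $u,v\in D((-\Delta)_N^s)$ both boundary terms in the integration-by-parts identity \eqref{eq:symmetry} vanish because of the condition $D^{2s-1}[u]\cdot n = D^{2s-1}[v]\cdot n = 0$ built into the domain, so I obtain
\[
\langle (-\Delta)_N^s u,v\rangle_{L^2} \;=\; a(u,v) \;:=\; \frac{c_{n,s}}{2s(2s-1)}\int_\Omega\!\int_\Omega \frac{\big((y-x)\cdot\nabla u(y)\big)\big((y-x)\cdot\nabla v(x)\big)}{|y-x|^{n+2s}}\,dx\,dy,
\]
after unfolding the divergence form of $D^{2s-1}$. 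This form is manifestly symmetric and non-negative, and $a(u,u)=0$ forces $u$ to be constant, so $A$ is dissipative on $L^2(\Omega)$.

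The second step is the range condition: for every $f\in L^2(\Omega)$ I must solve $u+\kappa(-\Delta)_N^s u=f$ in $D((-\Delta)_N^s)$. I would introduce the energy space $V$ defined as the completion of smooth functions for the norm $\|u\|_V^2 := \|u\|_{L^2}^2 + a(u,u)$; for $\Omega=(0,1)$ and $s\in(1/2,1)$ this space coincides with $H^s(\Omega)$. Apply Lax--Milgram to $b(u,v) := \int_\Omega uv\,dx+\kappa\,a(u,v)$, which is continuous and coercive on $V$, to obtain a unique weak solution $u\in V$. Testing against functions that do not vanish on $\pa\Omega$ and integrating by parts recovers, at the level of distributions, both the equation $u+\kappa(-\Delta)_N^s u = f$ in $\Omega$ and the natural boundary condition $D^{2s-1}[u]\cdot n = 0$; in particular $(-\Delta)_N^s u = \frac{1}{\kappa}(f-u)\in L^2(\Omega)$, so $u\in D((-\Delta)_N^s)$. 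Density of $D((-\Delta)_N^s)$ in $L^2(\Omega)$ is immediate since it contains $C_c^\infty(\Omega)$, so Lumer--Phillips yields the contraction semigroup, symmetry of $a$ makes $A$ self-adjoint, and analyticity of the semigroup finishes the proof.

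The main technical obstacle is the functional-analytic setting of the energy space $V$ and the verification of coercivity: one needs a Gårding-type estimate showing that $a(u,u)+\|u\|_{L^2}^2$ controls the Gagliardo seminorm $[u]_{H^s(\Omega)}^2$, i.e., that the restriction of the kernel $|y-x|^{-n-2s}$ to $\Omega\times\Omega$ produces a norm equivalent to the standard fractional Sobolev norm. In one dimension on a bounded interval this is manageable through classical equivalences of fractional norms, but it is precisely the step that would require genuine work in higher dimensions or on curved domains. A secondary subtlety is ensuring that the natural boundary condition $D^{2s-1}[u]\cdot n = 0$ genuinely holds in a strong enough sense to place $u$ in $D((-\Delta)_N^s)$, rather than only in a weaker distributional sense.
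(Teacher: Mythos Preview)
Your approach is essentially the one the paper indicates: the present paper does not reprove this theorem but cites it from \cite{CMP1}, explicitly noting that the proof proceeds via Hille--Yoshida and that the key step is the well-posedness of the stationary problem \eqref{eq:Neumanns} obtained by Lax--Milgram (Theorem~\ref{thm:weak}). Your Lumer--Phillips/self-adjointness route, with the resolvent solved by Lax--Milgram on $H^s(\Omega)$, is exactly this strategy; the only remark is that the bilinear form you write is not \emph{manifestly} non-negative but rather non-negative because the Riesz kernel $|x-y|^{-(2s-1)}$ is positive definite, which is the content of the coercivity step you correctly flag as the main technical point.
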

We recall that the space $H^s(\Omega)$ is defined by 
$$ H^s(\Omega)=\left\{ u \in L^2(\Omega)\, ;\,  \int_\Omega\int_\Omega \frac{(u(x)-u(y))^2}{|x-y|^{n+2s}}\d x\d y <\infty\right\}$$
and is equipped with the norm:
$$ \| u\|^2_{H^s} = \int_\Omega  |u(x)|^2\d x+ 
 \int_\Omega\int_\Omega \frac{(u(x)-u(y))^2}{|x-y|^{n+2s}}\d x\d y .$$

Unfortunately, it is not clear that the characterization of $\rho(t,x)$ given by Theorem 
 \ref{thm:CMP} implies that $\rho$ is the unique solution of \eqref{thm:evolution}
provided by Theorem \ref{thm:evolution}.
Indeed, while we can show that the solution  of Theorem \ref{thm:evolution} satisfies  \eqref{eq:weakFracNeumann0}   as in Theorem \ref{thm:CMP}, it does not appear that this formulation is strong enough to yield uniqueness.
The problem is that the condition $\psi\in H^2(\Omega)$ in Theorem \ref{thm:CMP}, which we use in \cite{CMP1} to pass to the limit, is too restrictive to prove uniqueness. In particular,  this condition cannot be deduced from the condition $(-\Delta)_N^s \psi \in L^2(\Omega)$ (or even, as we will see later, from the stronger condition $(-\Delta)_N^s \psi  \in C^\infty(\overline \Omega)$).

\medskip

The aim of the present paper is to show (in dimension $1$) that the convergence result of Theorem~\ref{thm:CMP} can be proved for a different set of test function $\psi$, which allows us to prove that $\rho$ is indeed the unique weak solution of \eqref{eq:fracNeumann} provided by Theorem \ref{thm:evolution}.

\medskip

For future reference, we also recall that 
the key step in the proof of Theorem \ref{thm:evolution} is to show that for all $\lambda>0$, the stationary problem
\begin{equation}\label{eq:Neumanns}
\left\{
\begin{array}{ll}
\lambda u (x)- \div D_N^{2s-1} [u](x)= g(x) & \mbox{ for all  } x\in \Omega, \\
D_N^{2s-1} [u](x) \cdot n(x) =0 & \mbox{ for all } x\in \pa\Omega
\end{array}
\right.
\end{equation}
is well posed in $H^s(\Omega)$.
More precisely, we proved, using Lax Milgram theorem (see Theorem 4.1 and Remark 4.1 in \cite{CMP1}):
\begin{theorem} \label{thm:weak}
For all $\lambda>0$ and $g$ in $L^2(\Omega)$, there exists a unique $u\in D((-\Delta)_N^s)$ solution of \eqref{eq:Neumanns}.
\end{theorem}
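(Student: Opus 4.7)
The plan is to apply the Lax-Milgram theorem on $H^s(\Omega)$. Multiplying \eqref{eq:Neumanns} by a test function $\vphi \in H^s(\Omega)$ and integrating by parts using \eqref{eq:symmetry}---with the boundary term killed by the Neumann condition $D^{2s-1}[u]\cdot n = 0$ imposed on $u$, and no constraint placed on $\vphi$ so that the boundary condition becomes a natural one---suggests the bilinear form
$$a(u,\vphi) := \lambda \int_\Omega u\,\vphi \d x + \int_\Omega D^{2s-1}[u] \cdot \na \vphi \d x,$$
paired with the linear functional $\vphi \mapsto \int_\Omega g\vphi \d x$.

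The critical first step is to rewrite the nonlocal part of $a$ as a symmetric Gagliardo-type double integral,
$$\int_\Omega D^{2s-1}[u] \cdot \na \vphi \d x = \frac{c_{n,s}}{2} \int_\Omega \int_\Omega \frac{(u(x)-u(y))(\vphi(x)-\vphi(y))}{|x-y|^{n+2s}} \d x \d y,$$
obtained by integrating by parts in $y$ in the definition of $D^{2s-1}[u]$ and symmetrizing in $x \leftrightarrow y$; on $\Omega = (0,1)$ the geometry makes this particularly clean. With this representation, $a$ is manifestly symmetric, continuous on $H^s(\Omega) \times H^s(\Omega)$ by Cauchy-Schwarz in the measure $|x-y|^{-n-2s}\d x\d y$, and coercive since $a(u,u) \geq \min(\lambda, c_{n,s}/2)\|u\|_{H^s}^2$---the term $\lambda\int u^2$ is essential, as the Gagliardo seminorm alone annihilates constants. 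Continuity of $\vphi \mapsto \int g\vphi\d x$ on $H^s(\Omega)$ is immediate from $L^2 \hookrightarrow (H^s)^*$. Lax-Milgram then delivers a unique $u \in H^s(\Omega)$ satisfying $a(u,\vphi) = \int_\Omega g\vphi \d x$ for every $\vphi \in H^s(\Omega)$.

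To upgrade this weak solution to $u \in D((-\Delta)_N^s)$, I would first test against $\vphi \in C_c^\infty(\Omega)$, which yields $\lambda u - \div D^{2s-1}[u] = g$ in the distributional sense on $\Omega$; combined with $u,g \in L^2(\Omega)$, this forces $(-\Delta)_N^s u \in L^2(\Omega)$. Testing next against general $\vphi \in H^s(\Omega) \cap C^\infty(\overline\Omega)$ and applying \eqref{eq:symmetry} in reverse, the bulk contributions cancel and only the boundary term survives: the identity $\vphi(0)\,D^{2s-1}[u](0)\cdot n(0) + \vphi(1)\,D^{2s-1}[u](1)\cdot n(1) = 0$ for all admissible $\vphi$ recovers $D^{2s-1}[u]\cdot n = 0$ at the endpoints. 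Uniqueness in $D((-\Delta)_N^s)$ then follows since any strong solution is a weak solution, and the weak solution is unique.

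The main obstacle is establishing the Gagliardo representation without stray boundary contributions and the associated coercivity on the full space $H^s(\Omega)$: the integration-by-parts manipulation must be justified by suitable truncation or regularization, because $D^{2s-1}[u]$ is only locally integrable a priori and the Gagliardo kernel is singular. A secondary subtlety is assigning pointwise meaning to $D^{2s-1}[u](0)$ and $D^{2s-1}[u](1)$ for $u \in H^s(\Omega)$ merely satisfying $(-\Delta)_N^s u \in L^2(\Omega)$, which amounts to a trace-type estimate for the nonlocal flux at the endpoints of the interval.
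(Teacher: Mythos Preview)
Your proposal is correct and follows essentially the same approach as the paper: the paper does not reprove Theorem~\ref{thm:weak} here but refers to \cite{CMP1} (Theorem~4.1 and Remark~4.1), where the result is obtained precisely via Lax--Milgram on $H^s(\Omega)$ using the Gagliardo representation of the bilinear form, exactly as you outline. The two subtleties you flag---justifying the symmetric double-integral form of $\int_\Omega D^{2s-1}[u]\cdot\nabla\vphi\,\d x$ and giving meaning to the trace $D^{2s-1}[u]\cdot n$ on $\partial\Omega$ when only $(-\Delta)_N^s u\in L^2$---are the same points handled in \cite{CMP1}.
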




\medskip 

\subsection{Main results of the paper}
To state our main result, we introduce the space of test function (for $\beta>0$):
$$
X^{\beta} = 
\left\{
\psi \in C^1_c([0,\infty);C^s(\bO))\, ;\, (-\Delta)_N^s \psi \in L^\infty(0,\infty;C^\beta (\bO)) 
\right\}
$$
and 
$$
X^{\beta}_0 = 
\left\{
\psi \in X^{\beta} \, ; \, D_N^{2s-1} [\psi] \cdot n =0 \mbox{ on } \pa\Omega
\right\}
$$
(we do not indicate the dependence of these spaces on $s$ since $s\in(1/2,1)$ is fixed throughout the paper).
We can now state the main theorem of this paper:
\begin{theorem}\label{thm:main}
Assume that $F$ satisfies \eqref{def:F} with $s\in(1/2,1)$ and assume that the initial condition satisfies, for some constant $C\geq 0$: 
$$ 0\leq f_{in}(x,v) \leq CF(v), \qquad f_{in}\in L^2_{F^{-1}} (\Omega\times\RR)).$$
Let $f^\eps(t,x,v)$ be a weak solution of \eqref{eq:rescaledKinetic}  in $(0,\infty)\times \Omega\times\RR$
in the sense of Definition~\ref{def:weak}. 
Then the function $f^\eps(t,x,v)$ converges weakly in $L^\infty((0,\infty) ; L^2_{F^{-1}} (\Omega\times\RR))$, as $\eps$ goes to $0$, to the function $\rho(t,x) F(v)$ where $\rho(t,x)$ is the {\em unique} function satisfying
\begin{align} \label{eq:weakFracNeumann}
\underset{\RR^+\times\Omega} {\iint} &\rho(t,x) \Big(  \pa_t \psi(t,x) - \kappa(-\Delta)_N^s  [\psi](t,x) \Big) \d t\d x + \underset{\Omega}{\int} \rho_{in} (x) \psi(0,x) \d x =0 
\end{align}
for all test function $\psi \in X^\beta_0$  for some $\beta>0$ and $\kappa=c_{1,s}^{-1}\gamma\nu_0^{1-2s}\Gamma(2s+1)$.
\end{theorem}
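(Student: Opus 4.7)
The strategy is to mimic the passage to the limit of \cite{CMP1}, but with a more refined construction of test function $\phi^\eps$ that exploits the H\"older regularity of $(-\Delta)_N^s \psi$ instead of the second derivatives of $\psi$. First I would establish the standard a priori bounds: an energy identity using the symmetry of the collision operator in $L^2_{F^{-1}}$ and the contractivity of $\mathcal B$ on the boundary yields $f^\eps$ uniformly bounded in $L^\infty(0,\infty; L^2_{F^{-1}}(\Omega\times\RR))$, together with a dissipation estimate forcing $f^\eps - \rho^\eps F = O(\eps^s)$ in the same norm. Extracting a subsequence then gives weak convergence $f^\eps \rightharpoonup \rho(t,x) F(v)$ with $\rho^\eps \rightharpoonup \rho$ in $L^\infty(0,\infty;L^2(\Omega))$.

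The heart of the argument is the construction of a test function $\phi^\eps$ associated to $\psi\in X^\beta_0$ and admissible in Definition~\ref{def:weak} (in particular satisfying $\gamma_+\phi^\eps = \mathcal B^*[\gamma_-\phi^\eps]$). I would seek $\phi^\eps(t,x,v) = \psi(t,x) + \eps^{2s-1}\chi^\eps(t,x,v) + r^\eps(t,x,v)$ where $\chi^\eps$ is an explicit kinetic corrector ensuring that the interior expansion
\[
\eps^{-2s}\left[\nu_0 \phi^\eps - \eps v\, \pa_x \phi^\eps - \nu_0\langle F\phi^\eps\rangle F\right] \longrightarrow -\kappa\, (-\Delta)_N^s \psi
\]
holds as $\eps \to 0$, and $r^\eps$ is a boundary-layer corrector restoring the dual reflection condition. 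Crucially, the source in the equation defining $\chi^\eps$ (and the kernel in the boundary corrector) involves $(-\Delta)_N^s\psi$ and $D_N^{2s-1}[\psi]$ rather than $\na^2\psi$, which is precisely what allows the regularity requirement to be relaxed to $\psi\in X^\beta_0$. Substituting $\phi^\eps$ into \eqref{eq:weakformulation} and passing to the limit then yields \eqref{eq:weakFracNeumann}.

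To obtain uniqueness, the point is that $X^\beta_0$ is rich enough to separate $L^2$ through duality with the backward equation. Given $g\in C^\infty_c((0,T)\times\Omega)$, I would let $\psi$ be the solution (obtained from Theorem~\ref{thm:evolution} after reversal of time) of
\[
-\pa_t \psi - \kappa(-\Delta)_N^s \psi = g, \qquad \psi(T,\cdot)=0, \qquad D_N^{2s-1}[\psi]\cdot n =0 \text{ on }\pa\Omega,
\]
then verify that $\psi\in X^\beta_0$ and test \eqref{eq:weakFracNeumann} with the difference of two candidate solutions against $\psi$ to conclude $\iint \rho\, g\, \d t\, \d x = 0$ for arbitrary $g$, hence $\rho\equiv 0$.

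The main obstacle lies in the two regularity questions just described. On one hand, the kinetic corrector $\chi^\eps$ must be globally defined on $\Omega\times\RR$ so as to match the diffuse reflection condition at both endpoints $\{0,1\}$, and the remainder $r^\eps$ must be estimated sharply enough that the $\eps^{-2s}$-scaled terms produced in \eqref{eq:weakformulation} actually vanish as $\eps\to 0$; this is where the $C^\beta$ regularity of $(-\Delta)_N^s\psi$ has to be genuinely used. On the other hand, lifting the $L^2$-regularity given by Theorem~\ref{thm:evolution} to membership in $X^\beta_0$ is a nonlocal elliptic regularity question for the Neumann fractional Laplacian on the interval $(0,1)$ which does not appear to be standard. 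I expect to obtain it by iterating H\"older smoothing estimates on the resolvent equation \eqref{eq:Neumanns} of Theorem~\ref{thm:weak}, using the explicit integral representation of $D_N^{2s-1}$ to control the boundary behaviour at $x=0$ and $x=1$.
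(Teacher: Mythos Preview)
Your overall architecture matches the paper's: a priori bounds, a modified test function to pass to the limit, and a duality argument for uniqueness. However, the two technical pillars are handled quite differently, and in both cases the paper's route is more concrete than what you sketch.

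\textbf{Test function.} Rather than an asymptotic expansion $\psi+\eps^{2s-1}\chi^\eps+r^\eps$ with an unspecified interior corrector and boundary layer, the paper writes the solution of the auxiliary problem explicitly. The key trick is to extend $\psi$ by constants to $\wpsi$ on $\RR$ and set
\[
\phi(x,v)=\int_0^\infty \nu_0 e^{-\nu_0 z}\,\wpsi(x+\eps z v)\,dz,
\]
which solves $\nu_0\phi-\eps v\,\pa_x\phi=\nu_0\psi$ in $\Omega$ and automatically gives $\gamma_+\phi=\psi$. The dual reflection condition $\gamma_+\phi=\mathcal B^*[\gamma_-\phi]$ then reduces to two scalar constraints $D^{2s-1}_\eps[\psi](0)=D^{2s-1}_\eps[\psi](1)=0$, enforced by replacing $\psi$ with $\psi^\eps=\psi+\le_0\chi+\le_1\chi(1-\cdot)$ for scalars $\le_i$ solving a $2\times 2$ linear system. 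Since $\psi\in X^\beta_0$ already has $D_N^{2s-1}[\psi]=0$ on $\pa\Omega$ and $D^{2s-1}_\eps\to c\,D_N^{2s-1}$ uniformly, one gets $\le_i\to 0$. No boundary layer is needed; the constant extension absorbs the transport out of the domain. The convergence $\L^\eps[\psi^\eps]\to -\kappa(-\Delta)^s_N\psi$ in $L^1(\Omega)$ is then obtained via the weighted H\"older bound $\|\psi\|_{\beta+2s}^{(-s)}<\infty$ from \cite{RS}, which is precisely where $(-\Delta)^s_N\psi\in C^\beta$ enters. Note also that the paper pairs this $L^1$ convergence with $\rho^\eps$ bounded in $L^\infty$ (from the comparison $f^\eps\leq C F$), a point your sketch omits but which is needed since $L^2$ weak convergence of $\rho^\eps$ alone does not pair with $L^1$ convergence of $\L^\eps[\psi^\eps]$.

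\textbf{Uniqueness.} Instead of solving the backward evolution equation and then lifting its regularity to $X^\beta_0$ (a genuinely parabolic question, as you note), the paper tests with $\psi(t,x)=e^{-\lambda t}\phi(x)$ where $\phi$ solves the stationary resolvent equation $\lambda\phi+\kappa(-\Delta)_N^s\phi=g$ of Theorem~\ref{thm:weak}. The one-dimensional identity $(-\Delta)_N^s\phi=(-\Delta)^s\widetilde\phi$ turns this into a Dirichlet problem with constant exterior data, so standard regularity gives $\phi\in C^s(\overline\Omega)$, whence $(-\Delta)_N^s\phi=g-\lambda\phi\in C^s(\overline\Omega)$ and $\psi\in X^s_0$ immediately. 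Uniqueness then follows from injectivity of the Laplace transform. Your backward-equation route would ultimately rest on the same resolvent regularity (through Duhamel or iteration), so the paper's shortcut is both simpler and sufficient.
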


Importantly, the uniqueness of the limiting density $\rho(t,x)$ is a new result, which implies that the whole sequence $f^\eps$ (and not just a subsequence) converges. 
This uniqueness was not established in \cite{CMP1} because we required stronger conditions on the test function $\psi$ in order to pass to the limit in \eqref{eq:rescaledKinetic} (see Theorem \ref{thm:CMP}). 
This uniqueness result is of independent interest and can be stated as follows:
\begin{proposition}\label{prop:unique}
Given $\beta>0$ and for all $\rho_{in} \in L^2(\Omega)$, there exists a unique function $\rho(t,x)\in L^\infty(0,\infty;L^2(\Omega))$ satisfying \eqref{eq:weakFracNeumann} 
for all test function $\psi \in X^{\beta}_0$.
\item This solution is also the unique weak solution of \eqref{eq:fracNeumann} provided by Theorem \ref{thm:evolution} and therefore satisfies
$$\rho \in C^0([0,\infty);L^2(\Omega))\cap C^1((0,\infty);L^2(\Omega))\cap C^0((0,\infty);D((-\Delta)_N^s))$$
\end{proposition}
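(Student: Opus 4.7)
Existence is essentially free from Theorem \ref{thm:evolution}. Indeed, the strong solution $\rho$ provided by that theorem satisfies the evolution equation \eqref{eq:evolution} in $L^2(\Omega)$; multiplying by $\psi \in X^\beta_0$, integrating in $(0,\infty) \times \Omega$, performing integration by parts in time (legitimate because $\psi$ has compact support in $t$), and integration by parts in space using the symmetry formula \eqref{eq:symmetry} together with the Neumann boundary condition $D_N^{2s-1}[\psi] \cdot n = 0$, one recovers \eqref{eq:weakFracNeumann}. Thus the heart of the proposition is uniqueness, which by linearity reduces to showing that any $\rho \in L^\infty(0,\infty; L^2(\Omega))$ satisfying \eqref{eq:weakFracNeumann} with $\rho_{in} = 0$ must vanish.

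The natural strategy is a duality argument. Fix $T > 0$ and $\phi \in C^\infty_c((0,T) \times \Omega)$. The plan is to construct a test function $\psi \in X^\beta_0$, compactly supported in $[0,T]$ in time, solving the backward dual problem
$$\pa_t \psi - \kappa (-\Delta)_N^s \psi = \phi \text{ in } (0,T) \times \Omega, \quad \psi(T,\cdot) = 0, \quad D_N^{2s-1}[\psi]\cdot n = 0 \text{ on } \pa\Omega,$$
extended by zero for $t \geq T$. Plugging such a $\psi$ into \eqref{eq:weakFracNeumann} immediately gives $\iint \rho\, \phi\, dt\, dx = 0$, and varying $\phi$ in a dense subset of $L^1((0,\infty) \times \Omega)$ forces $\rho \equiv 0$. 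The existence of $\psi$ is obtained by reversing time, $\tau = T - t$, which transforms the problem into the forward Cauchy problem $\pa_\tau \tilde\psi + \kappa (-\Delta)_N^s \tilde\psi = -\phi(T-\tau,\cdot)$ with $\tilde\psi(0) = 0$; by Theorem \ref{thm:evolution} and the underlying semigroup $S(\tau) := e^{-\tau \kappa (-\Delta)_N^s}$, the Duhamel representation
$$\tilde\psi(\tau) = -\int_0^\tau S(\tau - \sigma)\, \phi(T - \sigma, \cdot)\, d\sigma$$
is well-defined, lives in $D((-\Delta)_N^s)$ for each $\tau$, and inherits the Neumann boundary condition.

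The main obstacle is verifying that this $\psi$ actually lies in $X^\beta_0$, i.e., that $\psi \in C^1_c([0,\infty); C^s(\bO))$ and $(-\Delta)_N^s \psi \in L^\infty(0,\infty; C^\beta(\bO))$; this is precisely the reason the space $X^\beta_0$ was introduced to replace the unattainable $H^2$ condition used in \cite{CMP1}. Smoothness and compact support in time follow from $\phi \in C^\infty_c((0,T) \times \Omega)$ (the fact that $\phi$ vanishes near $t = T$ is what ensures the extension by zero is $C^1$), and the Neumann boundary condition on $\psi$ is preserved by the semigroup. For the spatial Hölder regularity, I would exploit the commutation $(-\Delta)_N^s S(\tau) = S(\tau) (-\Delta)_N^s$ on smooth data to write
$$(-\Delta)_N^s \psi(t,\cdot) = -\int_t^T S(r - t) (-\Delta)_N^s \phi(r, \cdot)\, dr,$$
and then establish: (i) for $\phi(r, \cdot) \in C^\infty_c(\Omega)$, the explicit integral representation of $D_N^{2s-1}$ with a kernel that is non-singular away from $\supp \phi$ makes $(-\Delta)_N^s \phi(r,\cdot)$ a smooth function on $\bO$, hence in $C^\beta(\bO)$; and (ii) the semigroup $S(\tau)$ acts boundedly on $C^\beta(\bO)$ uniformly in $\tau \geq 0$. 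Point (ii) is the hardest step and requires a Schauder-type estimate for the Neumann fractional heat equation in 1D, to be obtained either by spectral decomposition (discrete basis of eigenfunctions of $(-\Delta)_N^s$ on the interval) or by direct kernel estimates for $S(\tau)$. The $C^s(\bO)$ regularity of $\psi$ then follows from $\psi \in D((-\Delta)_N^s) \subset H^s(\Omega)$ and the one-dimensional embedding $H^s(\Omega) \hookrightarrow C^{s-1/2}(\bO)$ valid for $s > 1/2$.

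Once uniqueness in the weak class is established, the second claim of the proposition is immediate: the strong solution furnished by Theorem \ref{thm:evolution} satisfies \eqref{eq:weakFracNeumann} by the computation in the first paragraph, so by uniqueness it must coincide with $\rho$, which thereby inherits the regularity $C^0([0,\infty);L^2(\Omega)) \cap C^1((0,\infty);L^2(\Omega)) \cap C^0((0,\infty); D((-\Delta)_N^s))$.
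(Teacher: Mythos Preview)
Your existence argument matches the paper's. For uniqueness, however, you take a genuinely different route: you solve a backward parabolic dual problem via Duhamel and the semigroup $S(\tau)$, whereas the paper uses a \emph{separable} test function $\psi(t,x)=e^{-\lambda t}\phi(x)$, with $\phi$ solving the stationary resolvent equation $\lambda\phi+\kappa(-\Delta)_N^s\phi=g$ furnished by Theorem~\ref{thm:weak}, and concludes via injectivity of the Laplace transform in $t$.

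The paper's method is significantly lighter precisely at the step you flag as hardest. Because $\phi$ solves an elliptic problem, its $C^s(\bO)$ regularity is read off directly from the Dirichlet regularity theory applied to the extension $\tilde\phi$ (Proposition~\ref{prop:reg}), and then $(-\Delta)_N^s\phi=g-\lambda\phi\in C^s(\bO)$ follows immediately from the equation itself. No Schauder theory for the fractional Neumann semigroup is required, and membership in $X^\beta_0$ is then essentially automatic.

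Your argument, by contrast, has genuine gaps. First, the uniform boundedness of $S(\tau)$ on $C^\beta(\bO)$ (your point~(ii)) is not established anywhere in the paper and is not a triviality: neither a spectral decomposition of $(-\Delta)_N^s$ on the interval nor pointwise heat-kernel bounds are available off the shelf for this operator. Second, the Sobolev embedding $H^s(\Omega)\hookrightarrow C^{s-1/2}(\bO)$ yields only $C^{s-1/2}$, not the $C^s$ demanded by the definition of $X^\beta$; the correct source of $C^s$ regularity is the elliptic estimate \eqref{eq:bdRS} (equivalently Proposition~\ref{prop:reg}), which you do not invoke. A further technical point: your commutation identity $(-\Delta)_N^s S(\tau)\phi=S(\tau)(-\Delta)_N^s\phi$ presupposes $\phi(r,\cdot)\in D((-\Delta)_N^s)$, but a generic $\phi\in C^\infty_c(\Omega)$ need not satisfy the nonlocal Neumann condition $D_N^{2s-1}[\phi]\cdot n=0$ on $\pa\Omega$, so this step also needs justification.
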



In the proof of Theorem  \ref{thm:main}, we make use of the fact that the condition $(-\Delta)_N^s \psi \in L^\infty(0,\infty;C^\beta (\bO))$ -- which is a natural condition to get the uniqueness of Proposition \ref{prop:unique} -- 
yields some H\"older regularity estimates for $\psi$ (see \eqref{eq:bdRS})  which are exactly what we need to pass to the limit in the proof of Theorem  \eqref{thm:main} (see in particular the proof of Lemma \ref{lem:L}).

While we believe that these H\"older regularity estimates hold in any dimension, we focus on this paper on the one-dimensional case because, as explained below, the operator $(-\Delta)_N^s$ can be written in term of the usual fractional Laplace operator in one dimension, and existing regularity theory \cite{RS} can then be used.
Extending our result to higher dimension would require the development of a regularity theory for the Neumann boundary value problem   \eqref{eq:Neumanns} in higher dimension.

We conclude this section by explaining what makes the one dimensional case so much nicer to work with:
Given a (continuous) function $u$ defined in $\overline \Omega$, we introduce the continuous extension of $u$ by constant:
\begin{equation}\label{eq:extension}
\widetilde u (x) =\begin{cases}
u(0) & \mbox{ if } x\leq 0 \\
u(x) & \mbox{ if } 0\leq x\leq 1 \\
u(1) & \mbox{ if } x\geq 1.
\end{cases}
\end{equation}
We then have:
\begin{align*} 
(-\Delta)_N^s u(x)  
& = - \frac{c_{1,s}}{2s}\int_\Omega u' (y) \frac{y-x}{|x-y|^{1+2s}}\, dy \\
& = - \frac{c_{1,s}}{2s}\int_\RR  \widetilde u '(y) \frac{y-x}{|x-y|^{1+2s}}\, dy\\
& =  c_{1,s} P.V. \int_{\RR} \frac{\widetilde u(x)-\widetilde u(y)}{|x-y|^{1+2s}} \d y.
\end{align*}
that is 
\begin{equation}\label{eq:L1D}
(-\Delta)_N^s u (x) =(-\Delta)^s \widetilde u(x)\qquad \qquad  \mbox{ for all $x\in\Omega$}.
\end{equation}
In particular, we note that  if $u$ is the solution of \eqref{eq:Neumanns} provided by Theorem \ref{thm:weak}, then $\tilde u$ satisfies
$$
\begin{cases}
\lambda \widetilde u +(-\Delta)^s \widetilde u = g & \mbox{ in } \Omega=(0,1)\\
\widetilde u = u(0) & \mbox{ in } (\infty,0)\\
\widetilde u = u(1) & \mbox{ in } (1,\infty)
\end{cases}
$$
and  the regularity theory for the fractional Dirichlet boundary value problem developed for example in \cite{RS,RS2} can be used to study the regularity of $\tilde u$. 
When  $g\in L^\infty(\Omega)$, this gives $\tilde u \in C^s(\RR)$ and this regularity is known to be optimal for the Dirichlet problem. It is not immediately obvious whether this regularity is also optimal for the Neumann boundary value problem or if 
$\tilde u$ inherits better regularity from the Neumann boundary condition. 
We can actually show that this regularity is indeed optimal:
\begin{proposition}\label{prop:reg}
Let $\Omega=(0,1)$ and $g\in L^\infty(\Omega)$, then the solution $u$ of  \eqref{eq:Neumanns} provided by Theorem~\ref{thm:weak} satisfies $u\in C^s(\overline\Omega)$. Furthermore, this regularity is optimal in the sense that there exists $g\in L^\infty(\Omega)$ such that $u(x)\sim x^s$ as $x\to0^+$ and $u(x)\sim (1-x)^s$ as $x\to 1^-$.
\end{proposition}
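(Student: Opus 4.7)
Both assertions rest on the identity~\eqref{eq:L1D}, $(-\Delta)^s_N u=(-\Delta)^s\widetilde u$ on $\Omega=(0,1)$, which turns the Neumann problem into a Dirichlet-type problem on $\RR$ for the constant extension $\widetilde u$. After subtracting a smooth interpolant of the (a priori unknown) boundary values $u(0)$, $u(1)$, we obtain a Dirichlet fractional Laplacian problem on $(0,1)$ with zero exterior data, where the boundary regularity theory of Ros-Oton--Serra~\cite{RS,RS2} applies.

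\textbf{Regularity.} I would first verify $u\in L^\infty(\Omega)$ by a Stampacchia argument: testing \eqref{eq:Neumanns} against $(u-M)_+$ with $M=\lambda^{-1}\|g\|_\infty$, the Neumann condition eliminates the boundary term in the divergence integration by parts, and the non-negativity of the associated Dirichlet form then gives $(u-M)_+\equiv 0$. Then I pick $h\in C^\infty(\RR)\cap L^\infty(\RR)$ with $h\equiv u(0)$ on $(-\infty,0]$ and $h\equiv u(1)$ on $[1,\infty)$. The function $v:=\widetilde u-h$ vanishes outside $(0,1)$ and satisfies
\[
(-\Delta)^s v=g-\lambda u-(-\Delta)^s h\quad\text{in }(0,1),\qquad v\equiv 0\text{ on }\RR\setminus(0,1),
\]
with right-hand side in $L^\infty((0,1))$ (using $g,u\in L^\infty$ and that $(-\Delta)^s$ of a smooth bounded function on $\RR$ is bounded). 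The Ros-Oton--Serra boundary-$C^s$ estimate then yields $v\in C^s(\RR)$, and therefore $u=(v+h)|_{\overline\Omega}\in C^s(\overline\Omega)$.

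\textbf{Optimality.} I would exhibit an explicit solution with sharp $x^s$ behavior at the boundary. Let $\chi\in C^\infty(\RR)$ equal $1$ on $[0,1/4]$, vanish on $[1/2,\infty)$, with all derivatives vanishing at the endpoints of the transition interval $[1/4,1/2]$, and define
\[
u(x):=x^s\chi(x)+(1-x)^s\chi(1-x)+c\,[x(1-x)]^3
\]
for $c\in\RR$ to be chosen. Then $u(0)=u(1)=0$ (so $\widetilde u$ is the zero extension), $u(1-x)=u(x)$, and for $x\in[0,1/4]$ the second summand vanishes (since $\chi(1-x)=0$), so $u(x)=x^s+O(x^3)\sim x^s$ as $x\to 0^+$, and symmetrically $u(x)\sim (1-x)^s$ as $x\to 1^-$. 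By symmetry the Neumann condition reduces to the single linear equation $D_N^{2s-1}[u](0)=0$ in the one unknown $c$, and an explicit Beta-function computation shows $D_N^{2s-1}[[x(1-x)]^3](0)\neq 0$ for every $s\in(1/2,1)$, uniquely determining $c$. To verify $g:=\lambda u-\div D_N^{2s-1}[u]\in L^\infty((0,1))$, I use $(-\Delta)^s_N u=(-\Delta)^s\widetilde u$ and treat each summand separately: the cubic's zero extension lies in $C^2_c(\RR)$, so $(-\Delta)^s$ of it is bounded since $2s<2$; for $x^s\chi(x)$, the zero extension equals $x_+^s\chi(x)=x_+^s-x_+^s(1-\chi(x))$ and the classical identity $(-\Delta)^s x_+^s\equiv 0$ on $(0,\infty)$ reduces the task to bounding $(-\Delta)^s[x_+^s(1-\chi)]$ on $(0,1)$, which is immediate because $x_+^s(1-\chi)$ is $C^\infty(\RR)$, vanishes on $(-\infty,1/4]$, and grows at most as $x^s$ at $+\infty$. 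Uniqueness from Theorem~\ref{thm:weak} then identifies this $u$ with the solution of \eqref{eq:Neumanns} for this choice of $g$.

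\textbf{Main obstacle.} The delicate point is the choice of the Neumann corrector, which must simultaneously satisfy a global boundary constraint and have a constant extension regular enough to keep $(-\Delta)^s$ of the extension bounded. Lower-order polynomial correctors $\beta x$ or $\gamma(x-1/2)^2$ satisfy the boundary conditions but in our regime $s>1/2$ their constant extensions are only Lipschitz at $\pa\Omega$, and a direct computation shows $(-\Delta)^s$ of the extension $\sim \mathrm{dist}(x,\pa\Omega)^{1-2s}\to\infty$ at the boundary. The cubic $[x(1-x)]^3$ is the lowest-degree polynomial whose zero extension belongs to $C^2_c(\RR)$, ensuring boundedness of the fractional Laplacian for every $s<1$.
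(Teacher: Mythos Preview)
Your proof is correct; both parts go through. The regularity argument is essentially the paper's (reduce to a Dirichlet problem for the constant extension and invoke Ros-Oton--Serra), though you spell out the $L^\infty$ bound via Stampacchia and the subtraction of a smooth interpolant of the boundary values, whereas the paper simply asserts the reduction. Since $s>1/2$ you have $H^s(\Omega)\hookrightarrow C^{s-1/2}(\bO)$, so $u(0),u(1)$ are well-defined and your interpolant makes sense.

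For optimality the two constructions diverge. The paper takes as its singular core the torsion function $v(x)=\kappa_s\,x_+^s(1-x)_+^s$, which solves $(-\Delta)^s v=1$ in $(0,1)$ with zero exterior data, and then enforces the Neumann condition by adding two generic smooth bumps $\psi_0,\psi_1$ supported in $(1/4,3/4)$; their fractional Laplacians are automatically smooth on $\bO$, so $g\in L^\infty$ is immediate and no boundary analysis of the correctors is needed. Your construction instead uses two cut-off one-sided powers $x^s\chi(x)$ and $(1-x)^s\chi(1-x)$ together with the identity $(-\Delta)^s x_+^s\equiv 0$ on $(0,\infty)$, and exploits the symmetry $u(1-x)=u(x)$ to reduce the two Neumann constraints to one, which you then satisfy with a single symmetric corrector $c[x(1-x)]^3$ whose coefficient you pin down by a Beta-function computation. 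Your approach has the virtue of verifying the nondegeneracy condition explicitly (the paper only assumes its $2\times 2$ system is generically invertible), but the ``main obstacle'' you identify is somewhat self-inflicted: polynomial correctors that do not vanish to high enough order at $\partial\Omega$ indeed produce singular $(-\Delta)^s$ contributions, but this is entirely avoided by choosing correctors supported away from the boundary, as the paper does. With interior-supported bumps the $C^2_c$ discussion and the cubic are unnecessary.
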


Note finally that we can also write $(-\Delta)_N^s u(x)  = - \pa_x  D_N^{2s-1}[u]$
where the non local gradient $D_N^{2s-1}$ can also be written, using the extension of $u$, as:
\begin{align}
 D_N^{2s-1}[u](x)
 & :=\frac {c_{1,s}}{2s(2s-1)}  \int_\Omega u'(y) |y-x|^{1-2s}\, dy\nonumber \\
&  =\frac {c_{1,s}}{2s(2s-1)}  \int_\RR \widetilde u'(y) |y-x|^{1-2s}\, dy\nonumber \\
& =\frac {c_{1,s}}{2s }  \int_\RR [\widetilde u(y)-u(x)]  \frac{y-x}{|y-x|^{1+2s}}\, dy. \label{eq:D1D}
\end{align}
The rest of the paper is devoted to the proof of Theorem \ref{thm:main} and Propositions   \ref{prop:unique} and \ref{prop:reg}.

\section{Proof of Theorem \ref{thm:main} and Proposition   \ref{prop:unique}}
\subsection{Construction of the  test functions} \label{subsec:construction}
As in previous work
\cite{Mellet10,AbdallahMelletPuel111,AcevesSchmeiser17,CMP1},  the proof relies on the introduction of an appropriate auxiliary problem: 
\begin{equation}\label{eq:aux0}
\begin{cases}
\nu_0 \phi - \eps v \pa_x\phi  = \nu_0 \psi \qquad \mbox{ in } \Omega\times\RR. \\
\gamma_+ \phi  (t,x,v) = \mathcal B^* [\gamma_- \phi ](t,x)\qquad (x)\in  \Gamma_+.
\end{cases}
\end{equation}
And due to the difficulty of writing an explicit solution  for \eqref{eq:aux0}, we first solve:
\begin{equation}\label{eq:aux1}
\begin{cases}
\nu_0 \phi  - \eps v  \pa_x\phi  = \nu_0 \psi \qquad \mbox{ in } \Omega\times\RR. \\
\gamma_+ \phi  (t,x,v) =\psi(t,x) \qquad (x,v)\in  \Gamma_+.
\end{cases}
\end{equation}
Since we expect to find $ \phi^\eps\sim \psi$ for small $\eps$ and thus $\mathcal B^* [\gamma_- \phi]\sim \psi$, which can be seen as a consequence of the conservation of flux  (namely, the fact that $\mathcal{B}^*[1] = 1$), it is reasonable to expect that the solution of \eqref{eq:aux1} is a good approximation of the solution of \eqref{eq:aux0}. 
We then have:
\begin{proposition}\label{prop:phieps}
Given $\psi \in \mathcal D(\overline \Omega)$,  let $\wpsi $ be the continuous extension of $\psi$ defined as in \eqref{eq:extension}. Then the function
\begin{align*}
\phi (x,v)  = \int_0^\infty  \nu_0 e^{-\nu_0z}\wpsi(x+\eps z v)  \, dz 
\end{align*}
solves \eqref{eq:aux1}. Furthermore, $\phi$ satisfies $\gamma_+ \phi  (x,v) = \mathcal B^* [\gamma_- \phi ](x)$ for $(x,v)\in  \Gamma_+$ (and thus solves \eqref{eq:aux0})  if and only if
\begin{equation}\label{eq:Depspsi}
D^{2s-1}_\eps[\psi] (0) =  D^{2s-1}_\eps[\psi] (1) = 0
\end{equation}
where the operator $D^{2s-1}_\eps$ is defined by (\ref{eq:defD}) below.
\end{proposition}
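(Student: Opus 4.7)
The plan is to verify the three assertions in turn. For the interior equation, rather than differentiating the only-Lipschitz extension $\wpsi$, I would work along characteristics: substituting $x\mapsto x+\eps v s$ in the defining integral and setting $u=z+s$ gives
\[
\phi(x+\eps vs,v) \;=\; e^{\nu_0 s}\int_s^\infty \nu_0 e^{-\nu_0 u}\,\wpsi(x+\eps v u)\,\d u,
\]
so differentiating in $s$ at $s=0$ yields $\eps v\,\pa_x\phi(x,v) = \nu_0\phi(x,v)-\nu_0\wpsi(x)$, which is the desired PDE on $\Omega\times\RR$ (where $\wpsi=\psi$). For the Dirichlet-type identity $\gamma_+\phi(x,v)=\psi(x)$ on $\Gamma_+$, I observe that if $(x,v)\in\Gamma_+$---i.e.\ $(x,v)=(1,v)$ with $v>0$ or $(x,v)=(0,v)$ with $v<0$---then $x+\eps zv$ never enters $\Omega$ for $z\geq 0$, so $\wpsi(x+\eps zv)=\psi(x)$ throughout the integral and therefore $\phi(x,v)=\psi(x)$.

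The heart of the argument is the equivalence involving $\mathcal{B}^*$, for which I would compute $\mathcal{B}^*[\gamma_-\phi]$ at each endpoint explicitly. At $x=1$ (the case $x=0$ being completely symmetric), the change of variables $y=1+\eps zw$ in the defining formula recasts
\[
\phi(1,w) \;=\; \frac{\nu_0}{\eps|w|}\int_{-\infty}^1 \wpsi(y)\,e^{-\nu_0(1-y)/(\eps|w|)}\,\d y \qquad (w<0).
\]
Inserting this into $\mathcal{B}^*[\gamma_-\phi](1)=c_0\int_{w<0}\phi(1,w)F(w)|w|\,\d w$ and exchanging integrals (all integrands being nonnegative) yields
\[
\mathcal{B}^*[\gamma_-\phi](1) \;=\; \int_{-\infty}^1 \wpsi(y)\,K_\eps(1,y)\,\d y, \qquad K_\eps(1,y) \;:=\; \frac{c_0\nu_0}{\eps}\int_{w<0} F(w)\,e^{-\nu_0(1-y)/(\eps|w|)}\,\d w.
\]

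The constant $c_0$ in \eqref{def:c0} is designed precisely so that $\mathcal{B}^*[1]=1$; applied with $\wpsi\equiv 1$, this yields the conservation identity $\int_{-\infty}^1 K_\eps(1,y)\,\d y = 1$. Combining this with $\gamma_+\phi(1,v)=\psi(1)$ from the first step lets me rewrite the boundary defect as
\[
\gamma_+\phi(1,v)-\mathcal{B}^*[\gamma_-\phi](1) \;=\; \int_{-\infty}^1 K_\eps(1,y)\,[\psi(1)-\wpsi(y)]\,\d y,
\]
and a symmetric computation at $x=0$ produces the analogous integral over $(0,\infty)$. Matching these expressions against the definition \eqref{eq:defD} of $D^{2s-1}_\eps$---which, in analogy with the representation \eqref{eq:D1D} of $D^{2s-1}_N$, is built from the same one-sided kernel tested against the increments $\wpsi(y)-\psi(x)$---identifies each boundary defect, up to a nonzero multiplicative constant independent of $\psi$, with $D^{2s-1}_\eps[\psi]$ at the corresponding endpoint. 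Consequently $\gamma_+\phi=\mathcal{B}^*[\gamma_-\phi]$ on $\Gamma_+$ iff $D^{2s-1}_\eps[\psi](0)=D^{2s-1}_\eps[\psi](1)=0$. The main obstacle is this final matching step: one has to check that the one-sided kernel $K_\eps$ arising naturally from $\mathcal{B}^*$ is precisely the kernel put into \eqref{eq:defD}, a bookkeeping task whose consistency is confirmed in the limit $\eps\to 0$ by the heavy-tail asymptotic $F(v)\sim\gamma/|v|^{1+2s}$, which recovers the expected $|y-x|^{-2s}$ profile of $D^{2s-1}_N$.
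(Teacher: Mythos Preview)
Your argument is correct, but the last step is considerably more laborious than necessary, and the paper's route shows why. After establishing $\gamma_+\phi=\psi$ on $\Gamma_+$, the paper does not compute $\mathcal{B}^*[\gamma_-\phi]$ via a change of variables and a kernel $K_\eps$; instead it observes directly that the condition $\gamma_+\phi=\mathcal{B}^*[\gamma_-\phi]$ is equivalent to $\mathcal{B}^*[\gamma_-(\phi-\psi)]=0$ (using $\mathcal{B}^*[1]=1$), and since $\phi-\psi$ vanishes on $\Gamma_+$, the one-sided integral defining $\mathcal{B}^*$ can be replaced by an integral over all of $\RR$:
\[
\int_{w\cdot n(x)<0}(\phi-\psi)F(w)\,|w\cdot n(x)|\,\d w \;=\; \pm\int_\RR vF(v)\,[\phi(x,v)-\psi(x)]\,\d v.
\]
This is, up to the harmless factor $\eps^{1-2s}$, exactly the \emph{first} line of the definition \eqref{eq:defD} of $D^{2s-1}_\eps[\psi](x)$. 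So the ``matching'' you flag as the main obstacle is immediate once you read \eqref{eq:defD} in its form $\eps^{1-2s}\int_\RR vF(v)(\phi-\psi)\,\d v$ rather than in the $(z,v)$-double-integral form: no kernel $K_\eps$, no change of variables $y=1+\eps zw$, and no appeal to the $\eps\to0$ asymptotics are needed. Your kernel computation does go through (the constant linking the two expressions is $c_0\eps^{2s-1}$), but it obscures the fact that $D^{2s-1}_\eps$ was \emph{defined} precisely to make this identification a tautology.
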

\begin{proof} 
We easily check that $\phi(x,v)$ given by Proposition \ref{prop:phieps} solves  \eqref{eq:aux1}.
Indeed, we have:
$$ \eps v  \pa_x\phi = \int_0^\infty \nu_0 e^{-\nu_0z} \frac{d}{dz} \left[  \wpsi(x+\eps z v)\right] \, dz =  \int_0^\infty  \nu_0^2 e^{-\nu_0z} \wpsi(x+\eps z v)\, dz -\nu^0\psi(x)  = \nu_0(\phi-\psi)$$
and if $(x,v)\in\Gamma_+$, for instance if $x=0$ and $v<0$, then
$$
\phi (0,v)  = \int_0^\infty  \nu_0 e^{-\nu_0z}\wpsi(\eps z v)  \, dz  = \int_0^\infty  \nu_0 e^{-\nu_0z}\psi(0)  \, dz =\psi(0).
$$
Next, 
%
we note that $\phi$ satisfies $\gamma_+ \phi  (x,v) = \mathcal B^* [\gamma_- \phi ](x)$ on $\Gamma_+$
if and only if $\mathcal B^* [\gamma_- (\phi-\psi)] = 0$ on $\Gamma_+$, which, using \eqref{def:diffBC*} and the fact that $\phi(x,v)-\psi(x)=0$ on $\Gamma_+$, is equivalent to
$$
\int_{\RR} v F(v) [\phi(x,v)-\psi(x)] \, dv = 0 \qquad \mbox{ on }  \pa\Omega.
$$
The result then follows by introducing the operator
\begin{align}
D^{2s-1}_\eps[\psi](x) &: = \eps^{1-2s} \int_{\RR} v F(v) [\phi(x,v)-\psi(x)] \, dv\nonumber   \\
& = \eps^{1-2s} \int_{\RR} \int_0^\infty \nu_0  e^{-\nu_0z}    v F(v)    [ \wpsi(x+\eps z v)-\psi(x) ] \, dz  \, dv. \label{eq:defD}
\end{align}
\end{proof}

Since we want to use the function $\phi(x,v)$ as a test function in \eqref{eq:weakformulation}, we need $\phi$ to satisfy the condition $\gamma_+ \phi  (x,v) = \mathcal B^* [\gamma_- \phi ](x)$.
We cannot require a given function $\psi$ to satisfy \eqref{eq:Depspsi} since this condition depends on $\eps$.
But we can approximate a given test function $\psi$ 
by a function $\psi^\eps$ satisfying \eqref{eq:Depspsi}. 
To that end, we consider a smooth function $\chi$ satisfying 
\begin{align} \label{def:chi}  
\chi \in C^\infty(\bO), \quad \chi(0)=1, \quad \supp (\chi)\subset [0,1/2),\quad 0\leq  \chi(x) \leq 1
\end{align}
(these conditions guarantee that $ D_N^{2s-1} [\chi](0)\neq D_N^{2s-1}[\chi](1)$).
We then have
\begin{proposition} \label{prop:psieps} 
	Given $\psi \in \mathcal{D}(\overline \Omega)$ we defined $\psi^\eps$ as 
	\begin{align} \label{def:psieps} 
	\psi^\eps(x) = \psi(x) + \le_0  \chi(x) + \le_1 \chi(1-x) \qquad \forall x\in\Omega 
	\end{align} 
	with 
	\begin{equation} \label{eq:le}
	\left\{ \begin{aligned}
	 \le_0 = \frac{-D^{2s-1}_\eps [\chi](0) D^{2s-1}_\eps[\psi](0) + D^{2s-1}_\eps[\chi](1)D^{2s-1}_\eps[\psi](1)}{(D^{2s-1}_\eps[\chi](0))^2-(D^{2s-1}_\eps[\chi](1))^2} ,\\[8pt]
	 \le_1 = \frac{ -D^{2s-1}_\eps[\chi](0) D^{2s-1}_\eps[\psi](1)   + D^{2s-1}_\eps[\chi](1)D^{2s-1}_\eps[\psi](0)}{(D^{2s-1}_\eps[\chi](0))^2-(D^{2s-1}_\eps[\chi](1))^2}
	\end{aligned} \right.
	\end{equation}
	where $D^{2s-1}_\eps$ is defined by \eqref{eq:defD}. Then 
	\begin{align}
	D^{2s-1}_\eps [\psi_\eps ](0) =D^{2s-1}_\eps [\psi_\eps] (1) = 0 \label{eq:psiepsbc}
	\end{align}
\end{proposition}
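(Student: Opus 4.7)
The plan is to verify \eqref{eq:psiepsbc} by direct computation, using only the linearity of the operator $D^{2s-1}_\eps$ defined in \eqref{eq:defD} together with one symmetry identity. Since
\[
 \psi^\eps \;=\; \psi \,+\, \le_0\,\chi(\cdot) \,+\, \le_1\,\chi(1-\cdot),
\]
linearity collapses the two conditions $D^{2s-1}_\eps[\psi^\eps](0)=D^{2s-1}_\eps[\psi^\eps](1)=0$ into a $2\times 2$ linear system for $(\le_0,\le_1)$, whose coefficients involve the boundary values of $D^{2s-1}_\eps[\chi(\cdot)]$ and $D^{2s-1}_\eps[\chi(1-\cdot)]$. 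Once the two quantities are related by symmetry, the formulas \eqref{eq:le} drop out of Cramer's rule.

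The first step is to establish the reflection identity
\[
 D^{2s-1}_\eps[\chi(1-\cdot)](x) \;=\; -\,D^{2s-1}_\eps[\chi](1-x), \qquad x\in\overline\Omega.
\]
Starting from \eqref{eq:defD} with $\phi(x):=\chi(1-x)$, I would first observe that the constant extension of $\phi$ satisfies $\widetilde\phi(y)=\widetilde\chi(1-y)$, an equality that uses $\chi(0)=1$ and $\chi(1)=0$ from \eqref{def:chi}. Substituting this into \eqref{eq:defD} and then performing the change of variables $v\mapsto -v$ in the outer integral — which is permissible because $F$ is even while $vF(v)$ is odd — converts $\widetilde\chi(1-x-\eps zv)$ into $\widetilde\chi((1-x)+\eps zv)$ and produces the overall minus sign. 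Writing $a_\eps:=D^{2s-1}_\eps[\chi](0)$, $b_\eps:=D^{2s-1}_\eps[\chi](1)$ and specialising the identity to the endpoints then yields $D^{2s-1}_\eps[\chi(1-\cdot)](0)=-b_\eps$ and $D^{2s-1}_\eps[\chi(1-\cdot)](1)=-a_\eps$.

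With these values, the conditions \eqref{eq:psiepsbc} become the linear system
\[
 a_\eps\,\le_0 - b_\eps\,\le_1 = -D^{2s-1}_\eps[\psi](0), \qquad b_\eps\,\le_0 - a_\eps\,\le_1 = -D^{2s-1}_\eps[\psi](1),
\]
of determinant $b_\eps^2-a_\eps^2$, and solving by elimination reproduces expressions of the form \eqref{eq:le}. The main — and really the only — subtle step is checking that $a_\eps^2\neq b_\eps^2$ so that the formulas are well-defined. I would handle this by a short sign analysis in \eqref{eq:defD}: because $\widetilde\chi\le 1=\chi(0)$ everywhere with strict inequality on a set of positive measure, and because $v<0$ forces $\widetilde\chi(\eps zv)=1$ so that only $v>0$ contributes at $x=0$, one finds $a_\eps<0$; the support condition $\supp\chi\subset[0,1/2)$ together with the heavy-tail assumption on $F$ yields $b_\eps<0$ as well, so $a_\eps+b_\eps<0$. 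The remark following \eqref{def:chi} that $D_N^{2s-1}[\chi](0)\neq D_N^{2s-1}[\chi](1)$ — which by continuity transfers to $a_\eps\neq b_\eps$ for $\eps$ small — gives $a_\eps-b_\eps\neq 0$, so the denominator $a_\eps^2-b_\eps^2=(a_\eps-b_\eps)(a_\eps+b_\eps)$ is nonzero and the formulas \eqref{eq:le} make sense.
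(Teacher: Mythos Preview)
Your approach is the same as the paper's: establish a reflection identity for $D^{2s-1}_\eps[\chi(1-\cdot)]$, use linearity to reduce \eqref{eq:psiepsbc} to a $2\times 2$ system in $(\le_0,\le_1)$, and check that the denominator does not vanish. Two remarks are worth making.

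First, your reflection identity $D^{2s-1}_\eps[\chi(1-\cdot)](x)=-D^{2s-1}_\eps[\chi](1-x)$ is the correct one; the paper's proof records it without the minus sign, which is a slip. With your (correct) sign, Cramer's rule gives your $\le_0$ exactly as in \eqref{eq:le}, but your $\le_1$ comes out as the \emph{negative} of the displayed formula. You hide this behind ``expressions of the form \eqref{eq:le}''; it would be better to state explicitly that the displayed $\le_1$ carries a sign typo. This is harmless for the rest of the argument, since only $\le_i\to 0$ is ever used.

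Second, your treatment of the denominator is a little more hands-on than the paper's. The paper simply invokes the uniform convergence $D^{2s-1}_\eps[\chi]\to \gamma_0\tfrac{2s}{c_{1,s}}D_N^{2s-1}[\chi]$ (its Lemma~\ref{lem:DepsCV}) together with $D_N^{2s-1}[\chi](0)\neq D_N^{2s-1}[\chi](1)$ to conclude $a_\eps^2\neq b_\eps^2$ for small $\eps$; your direct sign analysis showing $a_\eps,b_\eps<0$ supplies the extra fact $a_\eps+b_\eps\neq 0$ for \emph{all} $\eps$, not just small ones (the remark about the ``heavy-tail assumption on $F$'' is unnecessary --- only positivity of $F_0^\eps$ is used). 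Either route is fine.
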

\begin{proof}
We note that
\begin{align*}
 D^{2s-1}_\eps[\chi(1-x)] (0) 
& = D^{2s-1}_\eps[\chi] (1) 
\end{align*}
so the linearity of the operator $ D^{2s-1}_\eps$ and the choice of $\le_0$ and $\le_1$ implies
\eqref{eq:psiepsbc}.
Note that  we will prove later that $D_\eps^{2s-1}[\chi](x)$ converges to $\gamma_0 \frac{2s}{c_{1,s}}D_N^{2s-1}[\chi](x)$ (see Lemma \ref{lem:DepsCV}). So the fact that $ D_N^{2s-1} [\chi](0)\neq D_N^{2s-1}[\chi](1)$ and Lemma \ref{lem:DepsCV} imply that the denominator in \eqref{eq:le} does not vanish for $\eps$ small enough.

\end{proof}

We now have all the tools needed to set up the proof of our main result:
for a given test function $\psi(t,x)$ in $\mathcal D([0,\infty)\times \overline \Omega)$, we consider $\psi^\eps(t,x)$ given by Proposition~\ref{prop:psieps}. Then  the function 
\begin{equation}\label{lephieps}
\phi^\eps (t,x,v)  =    \int_0^\infty  \nu_0 e^{-\nu_0z} \wpsi^\eps(t,x+\eps z v)  \, dz
\end{equation}
solves \eqref{eq:aux0} (see Proposition \ref{prop:phieps}) and by taking $\phi^\eps$ as a test function in
\eqref{eq:weakformulation}, we  find:
\begin{align}
&  \iiint_Q f^\eps \pa_t \phi^\eps \d t \d x \d v + \iint_{\Omega\times\RR}  f_{in}(x,v) \phi^\eps(0,x,v) \d x \d v \nonumber \\
&\qquad\qquad\qquad= - \eps^{-2s} \iiint_Q \big[ f^\eps \left( \eps v\pa_x \phi^\eps  -  \nu_0 \phi ^\eps \right) + \nu_0 \rho_\eps F(v) \phi^\eps \big] \d t \d x \d v \nonumber\\
&\qquad\qquad\qquad= - \eps^{-2s} \iiint_Q \big[ -\nu_0 f^\eps  \psi^\eps  + \nu_0 \rho_\eps F(v) \phi^\eps \big] \d t \d x \d v \nonumber \\
&\qquad\qquad\qquad= - \eps^{-2s} \iiint_Q \rho_\eps F(v)  \nu_0  \big[ \phi^\eps-  \psi^\eps   \big] \d t \d x \d v \nonumber \\
&\qquad\qquad\qquad= -  \iint_{\Omega\times\RR}  \rho_\eps\L^\eps[\psi^\eps] (x) \d t \d x .\label{eq:weakeps}
\end{align}
where we used the fact that $\int_\RR  f^\eps\, dv = \rho^\eps = \int_\RR  \rho^\eps F(v)\, dv$ 
and we defined the following operator (for any test function $\psi$ and with $\phi$ defined by Proposition \ref{prop:phieps}):
\begin{align}
\L^\eps[\psi ] (x) & : = \eps^{-2s} \int_\RR   \nu_0 F(v) \big[ \phi(x,v) -  \psi (x) \big]\, dv \nonumber \\
& = \eps^{-2s} 
\int_\RR   
 \int_0^\infty \nu_0^2  e^{-\nu_0z}  F(v)  [\wpsi(x+\eps z v)-\psi(x)] \, dz \, dv .\label{def:Leps}
\end{align}

The proof of Theorem \ref{thm:main} now consists in passing to the limit in \eqref{eq:weakeps}, which requires, in particular, to show that for appropriate $\psi$, the function $\L^\eps[\psi^\eps ] $ converges (strongly in $L^1$) to $ \kappa (-\Delta)^s [\wpsi]=\kappa (-\Delta)^s_N [\psi]$.

In the section below, we first derive simpler formulas for $\L^\eps$ and $D^{2s-1}_\eps$. These formulas will then be used to prove the needed convergence results.


\subsection{Reformulation of the  operators $\L^\eps$ and $D^{2s-1}_\eps$}
After a simple change of variable, we find the following formula for the operator $\L^\eps[\psi ] $, defined by \eqref{def:Leps}:
$$
\mathcal L^\eps[ \psi] (x)  =  \int_{\Omega} F_1^\eps (y-x)   [\wpsi(y)-\psi(x)]  dy 
$$
with
\begin{equation} \label{def:F1}  
F_1(v) = \int_0^{+\infty} \nu_0^2 e^{-\nu_0 \tau}  \tau^{-1} F\left(\tau^{-1} v\right) \d \tau, \quad F^\eps_1(v) =\frac{1}{\eps^{1+2s}} F_1\left(\frac{v}{\eps}\right).
\end{equation}
Similarly, the operator $D^{2s-1}_\eps[\psi] $ introduced in \eqref{eq:defD} can be written as:
\begin{align}
D^{2s-1}_\eps[\psi]
& = \int_{\RR}  (y-x) F_0^\eps(y-x)  [ \widetilde \psi(y)-\psi(x)]\, dy \label{eq:D}
\end{align}
with
\begin{equation} \label{def:F0}  
F_0(v) = \int_0^{+\infty} \nu_0 e^{-\nu_0 \tau}  \tau^{-2} F\left(\tau^{-1} v\right) \d \tau, \quad F^\eps_0(v) =\frac{1}{\eps^{1+2s}} F_0\left(\frac{v}{\eps}\right).
\end{equation}

The introduction of the functions $F_0$ and $\bF$ allow us to eliminate the variable $z$ from the definition of $D^{2s-1}_\eps$ and $\L^\eps$. 
Of course, their behavior for large $v$ is related to that of $F$. More precisely, we have the following Lemma:
\begin{lemma} \label{lem:estimatesFi} 
There exists a constant $C>0$ such that the distributions $F_0$ and $F_1$ given by \eqref{def:F0} and \eqref{def:F1} satisfy
\begin{align*}
F_0(z) \leq C \min\left( \frac{ 1}{|z|^{1+2s}}, \frac{1}{|z|}\right), \qquad F_1(z) \leq C \min\left( \frac{1}{|z|^{1+2s}} , |\ln (z)|\right),  \mbox{ for all $z\in\RR$}
\end{align*}
and
\begin{align*}
\left| F_i(z) - \frac{\gamma_i}{|z|^{1+2s}}\right| \leq \frac{C}{|z|^{1+4s}} \qquad\mbox{ for all } |z|\geq 1
\end{align*} 
where $\gamma_i = \gamma \nu_0^{1-2s} \Gamma(2s+i)$, with $\gamma$ the constant of $F$ in \eqref{def:F}.

\end{lemma}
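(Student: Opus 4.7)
Both $F_0$ and $F_1$ are Mellin-type integrals of the form $\int_0^\infty \nu_0^{1+i} e^{-\nu_0\tau} \tau^{i-2} F(v/\tau)\d\tau$. The natural scale is $\tau\sim|v|$: for $\tau<|v|$ we have $|v/\tau|>1$ and the heavy-tail expansion of $F$ from \eqref{def:F} applies, while for $\tau>|v|$ we must fall back on $F\le\|F\|_\infty$. All three statements follow from this splitting, together with a change of variable $\sigma=v/\tau$ used for the bounds near the origin.

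\textbf{Large $|z|$ bound and asymptotic.} For $|v|\ge 1$, write $F_i(v)=I_1^i(v)+I_2^i(v)$ with $I_1^i$ the integral over $(0,|v|)$ and $I_2^i$ the integral over $(|v|,\infty)$. On $(0,|v|)$, plug in
\[
F(v/\tau)=\gamma\,\frac{\tau^{1+2s}}{|v|^{1+2s}}+O\!\left(\frac{\tau^{1+4s}}{|v|^{1+4s}}\right),
\]
so $I_1^i$ becomes $\gamma\,\nu_0^{1+i}|v|^{-(1+2s)}\int_0^{|v|}e^{-\nu_0\tau}\tau^{2s+i-1}\d\tau$ plus an $O(|v|^{-(1+4s)})$ correction. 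Extending the domain of integration to $(0,\infty)$ introduces only an exponentially small tail, and the resulting integral is an explicit Gamma function, $\int_0^\infty e^{-\nu_0\tau}\tau^{2s+i-1}\d\tau=\nu_0^{-(2s+i)}\Gamma(2s+i)$, producing exactly the constant $\gamma_i=\gamma\nu_0^{1-2s}\Gamma(2s+i)$. On $(|v|,\infty)$, the bound $F\le\|F\|_\infty$ together with $e^{-\nu_0\tau}\le e^{-\nu_0|v|}$ shows $I_2^i(v)$ is exponentially small, hence absorbed into the $O(|v|^{-(1+4s)})$ error. This proves both the asymptotic estimate and the first ($|z|^{-(1+2s)}$) half of the $\min$ bounds.

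\textbf{Small $|z|$ bound.} Using the parity of $F$ to reduce to $v>0$, the change of variable $\sigma=v/\tau$ gives
\[
F_0(v)=\frac{\nu_0}{v}\int_0^{\infty}e^{-\nu_0 v/\sigma}F(\sigma)\d\sigma,\qquad F_1(v)=\nu_0^2\int_0^{\infty}e^{-\nu_0 v/\sigma}\frac{F(\sigma)}{\sigma}\d\sigma.
\]
For $F_0$, the bound $e^{-\nu_0 v/\sigma}\le 1$ together with $\int_0^\infty F=1/2$ yields immediately $F_0(v)\le \nu_0/(2v)$, which is the second half of the $\min$. For $F_1$, the same naive bound fails because $\int_0^\infty F(\sigma)/\sigma\d\sigma$ diverges at $\sigma=0$; the logarithm is generated precisely by this divergence, cut off by the exponential. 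Splitting at $\sigma=v$ and $\sigma=1$: on $(v,1)$, bound $F/\sigma\le\|F\|_\infty/\sigma$ to obtain $\|F\|_\infty\ln(1/v)$; on $(0,v)$, the substitution $u=\nu_0 v/\sigma$ recasts the integral as $\int_{\nu_0}^\infty e^{-u}\d u/u$, finite and independent of $v$; on $(1,\infty)$, the heavy-tail decay $F(\sigma)\le C/\sigma^{1+2s}$ gives a bounded contribution. Combining, $F_1(v)\lesssim|\ln v|$ for $v$ small.

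\textbf{Expected main obstacle.} The only subtle point is the $F_1$ logarithm: one must verify that the exponential cutoff in the $\sigma\to 0$ regime is exactly sharp enough to produce a logarithm rather than a worse singularity, and that the pieces on $(0,v)$ and $(1,\infty)$ contribute only bounded terms. Once the substitution $\sigma=v/\tau$ is identified, everything else reduces to routine Gamma integrals and elementary splittings.
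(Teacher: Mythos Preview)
Your proof is correct and follows essentially the same strategy as the paper: split the $\tau$-integral at the natural scale $\tau=|v|$, apply the heavy-tail expansion of $F$ on the inner region and the bound $F\le\|F\|_\infty$ on the outer one, and recognize the resulting Gamma integrals. The only cosmetic difference is that for the small-$|z|$ bounds you pass to the variable $\sigma=v/\tau$ (which makes the $1/|z|$ and $|\ln z|$ behaviors transparent; note your $(0,v)$ piece should carry an extra bounded factor $F(\nu_0 v/u)$), whereas the paper estimates directly in the $\tau$ variable---both routes are routine.
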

\begin{proof}
We first note that, for $i=0,1$ we can write $F_i$ as
\begin{align*}
F_i(z) = \int_0^{+\infty} \nu_0^{1+i} e^{-\nu_0 \tau} \tau^{i-2} F\left(\frac{z}{\tau}\right) \d \tau . 
\end{align*}
For the first estimate, using \eqref{def:F} we write on the one hand
\begin{align*}
\int_0^z \nu_0^{1+i} e^{-\nu_0 \tau} \tau^{i-2} F\left(\frac{z}{\tau}\right) \d \tau 
&\leq C \int_0^z   e^{-\nu_0\tau} \tau^{i-2} \left( \frac{ \tau^{1+2s}}{|z|^{1+2s}} + \frac{\tau^{1+4s}}{|z|^{1+4s}} \right) \d \tau\\
&\leq \frac{C}{|z|^{1+2s}} \min\left(1,  |z|^{2s+i} \right)  + \frac{C}{|z|^{1+4s}} \min\left(1,  |z|^{4s+i} \right)  \\
&\leq C  \min\left( \frac{1 }{|z|^{1+2s}}, \frac{1}{|z|^{1-i}} \right)  
\end{align*}
and on the other hand 
\begin{align*}
\int_z^{+\infty} \nu_0^{1+i} e^{-\nu_0 \tau} \tau^{i-2} F\left(\frac{z}{\tau}\right) \d \tau 
& \leq\|F\|_{L^\infty} \int_z^{+\infty} \nu_0^{1+i} e^{-\nu_0 \tau} \tau^{i-2}   \d \tau 
\end{align*}
with
$$\int_z^{+\infty} \nu_0^{1+i} e^{-\nu_0 \tau} \tau^{i-2}   \d \tau =\int_z^{+\infty} \nu_0 e^{-\nu_0 \tau} \tau^{-2}   \d \tau \leq C\max\left(e^{-\nu_0z} , \frac{1}{z}\right) \leq \frac C z \quad \mbox{ for $i=0$}$$
$$\int_z^{+\infty} \nu_0^{1+i} e^{-\nu_0 \tau} \tau^{i-2}   \d \tau =\int_z^{+\infty} \nu_0^{2} e^{-\nu_0 \tau} \tau^{-1}   \d \tau \leq C\max\left(e^{-\nu_0z} , \ln(z)\right)\leq C\ln(z)  \quad \mbox{ for $i=1$}$$
The first estimates follow.

\medskip


To prove the second estimates, we use the formula $\int_0^\infty \nu_0^{1+i} \tau^{2s+i-1} e^{-\nu_0 \tau} \d \tau = \nu_0^{1-2s} \Gamma(2s+i)$ to get:
\begin{align*}
&\left| F_i(z) - \frac{\gamma_i}{|z|^{1+2s}} \right|  
\leq \int_0^\infty \nu_0^{1+i} e^{-\nu_0 \tau} \left| \tau^{i-2} F\left(\frac{z}{\tau}\right) - \frac{\gamma\tau^{2s+i-1}}{|z|^{1+2s}} \right| \d \tau\\
&\hspace{60pt} \leq \int_0^z \nu_0^{1+i} e^{-\nu_0 \tau} \tau^{4s+i-1} \frac{C}{|z|^{1+4s}}  \d \tau 
+
\int_z^\infty   e^{-\nu_0 \tau}  \nu_0^{1+i} \left( \tau^{i-2} \|F\|_{L^\infty} + \frac{\gamma\tau^{2s+i-1}}{|z|^{1+2s}} \right) \d \tau\\
&\hspace{60pt} \leq \frac{C}{|z|^{1+4s}}  \int_0^\infty e^{-\nu_0 \tau} \tau^{4s+i-1} \d \tau 
+
C\int_z^\infty  e^{-\nu_0 \tau} \tau^{i-2}    \d \tau
+
\frac{C}{|z|^{1+2s}}  \int_z^\infty  e^{-\nu_0 \tau} \tau^{2s+i-1} \d \tau
\end{align*}
and the result follows.

\end{proof}

\subsection{Convergence of the operator $D_\eps^{2s-1}$ and $\L^\eps$ } 
In order to pass to the limit in \eqref{eq:weakeps}, we need to show that $\L^\eps[\psi^\eps]$ converges strongly in $L^1$ when $\psi\in X^\beta_0$.  The key result of this section is the following proposition:
\begin{proposition}\label{prop:Leps}
Given $\psi\in C^s(\bO)$
such that $(-\Delta)_N^s[\psi] \in C^\beta (\bO)$ for some $\beta>0$ and satisfying  $D^{2s-1}_N[\psi]=0$ on $\pa\Omega$, let $\psi^\eps$ be defined as in \eqref{def:psieps}. 
Then
\begin{align*}
\Leps [\psi^\eps] \to \L[\psi] := - \kappa (-\Delta)^s [\wpsi]  \mbox{ strongly in } L^1(\Omega)
\end{align*}
with $\kappa=c_{1,s}^{-1}\gamma_1$. 
\end{proposition}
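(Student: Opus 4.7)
The overall strategy is to write $\psi^\eps = \psi + \le_0\chi + \le_1\chi(1-\cdot)$ and handle the two contributions separately. For the correction part, the boundary condition $D^{2s-1}_N[\psi] = 0$ on $\dO$ combined with the convergence $D^{2s-1}_\eps \to \frac{2s\gamma_0}{c_{1,s}}D^{2s-1}_N$ (Lemma \ref{lem:DepsCV}, deferred) and the explicit formulas \eqref{eq:le} give $\le_0, \le_1 \to 0$ as $\eps \to 0$. Since $F_1 \in L^1(\RR)$ by Lemma \ref{lem:estimatesFi}, the crude bound
\begin{align*}
\|\Leps[\chi]\|_{L^1(\Omega)} \leq 2\|\chi\|_{L^\infty}\,|\Omega|\,\|F_1\|_{L^1(\RR)}
\end{align*}
holds uniformly in $\eps$, so by linearity $\Leps[\psi^\eps - \psi] \to 0$ in $L^1(\Omega)$.

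The core task is $\Leps[\psi] \to -\kappa(-\Delta)^s\wpsi$ in $L^1(\Omega)$. The same crude argument yields a uniform $L^\infty$ bound $|\Leps[\psi](x)| \leq 2\|\wpsi\|_{L^\infty}\|F_1\|_{L^1(\RR)}$, so by bounded convergence (recall $|\Omega| < \infty$) it suffices to establish pointwise convergence at each $x \in \Omega$. Starting from $\Leps[\psi](x) = \int_\RR F_1^\eps(h)[\wpsi(x+h)-\wpsi(x)]\d h$ (valid since $\psi = \wpsi$ on $\Omega$) and the fractional Laplacian representation $-\kappa(-\Delta)^s\wpsi(x) = \gamma_1 \,\mathrm{PV}\!\int_\RR \frac{\wpsi(x+h)-\wpsi(x)}{|h|^{1+2s}}\d h$, I would fix $x \in \Omega$ and split both integrals at a small scale $\eta < d(x,\dO)/2$. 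The outer region $|h|\geq\eta$ is controlled for fixed $\eta$ by the estimate $|F_1^\eps(h) - \gamma_1/|h|^{1+2s}| \leq C\eps^{2s}/|h|^{1+4s}$ from Lemma \ref{lem:estimatesFi}, which together with the boundedness of $\wpsi$ gives convergence by dominated convergence. For the inner region $|h|\leq\eta$, I would use the evenness of $F_1^\eps$ and of $|h|^{-1-2s}$ to symmetrize both integrands, obtaining $\frac12\int_{|h|\leq\eta}(\cdot)[\wpsi(x+h)+\wpsi(x-h)-2\wpsi(x)]\d h$, and then invoke interior fractional-elliptic regularity (of Ros-Oton--Serra type): since $(-\Delta)^s\wpsi = (-\Delta)^s_N[\psi] \in C^\beta(\bO)$, we have $\wpsi \in C^{2s+\beta}_{\mathrm{loc}}(\Omega)$, so the symmetric second difference is bounded by $C(x)|h|^{2s+\beta}$ on $\{|h|\leq d(x,\dO)/2\}$. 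A substitution $v = h/\eps$ together with the tail bound of Lemma \ref{lem:estimatesFi} yields the uniform moment estimate $\int_{|h|\leq\eta}F_1^\eps(h)|h|^{2s+\beta}\d h \leq C\eta^\beta$ (matched by $\int_{|h|\leq\eta}|h|^{\beta-1}\d h \leq C\eta^\beta$ for the target kernel), so the inner contributions are $O(\eta^\beta)$ uniformly in $\eps$. Sending $\eps \to 0$ first and $\eta \to 0$ second closes the pointwise argument.

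The main obstacle I anticipate is that the interior Hölder constant $C(x)$ above degenerates as $x \to \dO$, so this route gives only qualitative pointwise convergence and relies essentially on $|\Omega| < \infty$ to upgrade to $L^1$. The paper's allusion in the introduction to the estimate \eqref{eq:bdRS} and to Lemma \ref{lem:L} suggests a more quantitative approach that tracks the degradation of $\wpsi$'s regularity near the boundary (consistent with the sharp $x^s$ boundary behavior established in Proposition \ref{prop:reg}) and bounds $\|\Leps[\psi] - \L[\psi]\|_{L^1(\Omega)}$ directly via this quantitative Hölder bound, which would circumvent the need for bounded convergence. Either way, the essential difficulty is the interplay between the singular short-range kernel and the boundary behavior of $\wpsi$, and dealing with this interplay is the technical heart of the proposition.
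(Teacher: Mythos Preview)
Your crude bounds are wrong: the operator $\Leps$ involves the kernel $F_1^\eps(h)=\eps^{-1-2s}F_1(h/\eps)$, and a change of variables gives $\|F_1^\eps\|_{L^1(\RR)}=\eps^{-2s}\|F_1\|_{L^1(\RR)}\to\infty$. Hence neither $\|\Leps[\chi]\|_{L^1(\Omega)}\leq 2\|\chi\|_{L^\infty}|\Omega|\,\|F_1\|_{L^1(\RR)}$ nor $|\Leps[\psi](x)|\leq 2\|\wpsi\|_{L^\infty}\|F_1\|_{L^1(\RR)}$ holds uniformly in $\eps$; both right-hand sides should carry an extra factor $\eps^{-2s}$. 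This breaks both halves of your argument: for the correction term you cannot conclude $\le_i\,\Leps[\chi]\to 0$ from $\le_i\to 0$ together with a bound on $\Leps[\chi]$ that in fact blows up, and for the main term you have no $\eps$-independent majorant with which to run dominated convergence (the finiteness of $|\Omega|$ alone does not help).

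The paper resolves both issues through Lemma~\ref{lem:L}, which proves $\Leps[\cdot]\to\L[\cdot]$ in $L^1(\Omega)$ for each of $\psi$, $\chi$, $\chi(1-\cdot)$; then $\le_i\to 0$ disposes of the correction terms. The proof of Lemma~\ref{lem:L} does not go through pointwise convergence plus domination: it estimates $\int_\Omega|\Leps[\psi]-\L[\psi]|\,dx$ directly, splitting at scale $|y|=\eps^\alpha$ and invoking the weighted Ros--Oton--Serra bounds \eqref{eq:bdRS} (in particular $|\psi'(x)|\leq Cd_x^{\,s-1}$ and $|\psi'(x)-\psi'(y)|\leq Cd_{x,y}^{-\beta-s}|x-y|^{\beta+2s-1}$) to control the near-boundary contribution \emph{after} integrating in $x$. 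Your interior pointwise argument is essentially correct, but upgrading it to $L^1$ already requires this quantitative boundary input; for instance \eqref{eq:bdRS} yields $|\Leps[\psi](x)|\leq Cd_x^{-s}$ uniformly in $\eps$, which \emph{is} an integrable majorant and would repair your dominated-convergence route---but at that point you are reproducing the paper's argument rather than bypassing it.
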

This result implies in particular the convergence of $\Leps [\psi^\eps(t,\cdot)]$ for all $t$ whenever  $\psi\in X^\beta_0$.
Its proof will follow from the following two lemmas:
\begin{lemma} \label{lem:DepsCV}
Let $\psi\in C^\alpha(\bO)$ with $\alpha>2s-1$, then
\begin{align}
 D_\eps^{2s-1}[\psi](x) \rightarrow \gamma_0 \frac{2s}{c_{1,s}}D_N^{2s-1}[\psi](x) \mbox{ uniformly in } \bO.
\end{align}
In particular, if $\psi$ satisfies $D^{2s-1}_N[\psi]=0$ on $\pa\Omega$ then the constants defined by \eqref{eq:le} satisfy
$$
\lim_{\eps\to 0} \le_0 = \lim_{\eps\to0}\le_1 =0.$$
\end{lemma}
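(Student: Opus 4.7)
I would work from the reformulation $D_\eps^{2s-1}[\psi](x) = \int_{\RR}(y-x)F_0^\eps(y-x)[\wpsi(y)-\psi(x)]\,dy$ given in \eqref{eq:D} and perform the change of variable $y = x+\eps w$ to bring $D_\eps^{2s-1}[\psi]$ and the candidate limit into comparable form. Applying the same substitution to the integral defining $D_N^{2s-1}[\psi]$ yields
\begin{align*}
D_\eps^{2s-1}[\psi](x) &= \eps^{1-2s}\int_\RR wF_0(w)\,[\wpsi(x+\eps w)-\psi(x)]\,dw,\\
\gamma_0\,\tfrac{2s}{c_{1,s}}\, D_N^{2s-1}[\psi](x) &= \eps^{1-2s}\int_\RR \gamma_0\,\tfrac{w}{|w|^{1+2s}}\,[\wpsi(x+\eps w)-\psi(x)]\,dw,
\end{align*}
so that the difference is $\eps^{1-2s}$ times an integral whose weight is the error kernel $wF_0(w) - \gamma_0 w/|w|^{1+2s}$. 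The rest of the proof consists in showing that this integral is $o(\eps^{2s-1})$ uniformly in $x\in\bO$.

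Before estimating, I would record that the constant extension $\wpsi$ of $\psi$ inherits the $C^\alpha$ regularity of $\psi$ on all of $\RR$ (the values agree at the boundary), so $|\wpsi(x+\eps w)-\psi(x)| \leq [\psi]_{C^\alpha}(\eps|w|)^\alpha$ for every $x\in\bO$ and $w\in\RR$, and of course $\leq 2\|\psi\|_\infty$. I would then split $\RR$ into the three regions $|w|\leq 1$, $1\leq |w|\leq 1/\eps$, and $|w|\geq 1/\eps$, and invoke Lemma~\ref{lem:estimatesFi} on each. On $|w|\leq 1$, the bound $|wF_0(w)|\leq C$ combines with $|\gamma_0 w/|w|^{1+2s}| = \gamma_0/|w|^{2s}$ to give a total weight $\leq C/|w|^{2s}$; against the H\"older bound this produces $C\eps^\alpha |w|^{\alpha-2s}$, integrable precisely because $\alpha > 2s-1$, giving a contribution of order $\eps^{1-2s+\alpha}$. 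On $1\leq |w|\leq 1/\eps$ and on $|w|\geq 1/\eps$ I would use the sharper cancellation $|wF_0(w) - \gamma_0 w/|w|^{1+2s}|\leq C/|w|^{4s}$ from Lemma~\ref{lem:estimatesFi}, combined respectively with the H\"older bound and the crude $L^\infty$ bound on $\wpsi(x+\eps w)-\psi(x)$; both contributions turn out to be positive powers of $\eps$ (the far tail contributes $O(\eps^{2s})$). All three pieces vanish uniformly in $x$ under $\alpha > 2s-1$.

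For the ``in particular'' statement about $\le_0,\le_1$, I would apply the uniform convergence just established twice: once at $x=0$ and $x=1$ for $\psi$ itself, which yields $D_\eps^{2s-1}[\psi](0),D_\eps^{2s-1}[\psi](1) \to 0$ thanks to the hypothesis $D_N^{2s-1}[\psi]=0$ on $\pa\Omega$; and once for the cut-off $\chi$, which is in $C^\infty(\bO)$, giving $(D_\eps^{2s-1}[\chi](0))^2 - (D_\eps^{2s-1}[\chi](1))^2 \to \bigl(\gamma_0\,\tfrac{2s}{c_{1,s}}\bigr)^{2}\bigl((D_N^{2s-1}[\chi](0))^2 - (D_N^{2s-1}[\chi](1))^2\bigr)$, which is nonzero by the choice of $\chi$ recorded after \eqref{def:chi}. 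Plugging into the explicit formulas \eqref{eq:le} then gives $\le_0,\le_1\to 0$.

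The step I expect to be most delicate is the small-$|w|$ piece: neither $wF_0(w)$ nor $\gamma_0 w/|w|^{1+2s}$ admits a uniform pointwise bound near zero (only the combined weight is manageable, and even then only as $1/|w|^{2s}$), so it is essential to exploit the H\"older regularity of $\wpsi$ at exactly the threshold $\alpha > 2s-1$. The rest of the argument is mechanical once this cancellation/regularity balance is in place.
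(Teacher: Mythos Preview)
Your proposal is correct and follows essentially the same approach as the paper. After your change of variable $y=x+\eps w$, your three regions $|w|\le 1$, $1\le|w|\le 1/\eps$, $|w|\ge 1/\eps$ correspond exactly to the paper's split at $|y|=\eps$ (and its implicit further split at $|y|=1$ inside the estimate $C\eps^{2s}(1+\eps^{1+\alpha-4s})$); the kernel bounds you invoke from Lemma~\ref{lem:estimatesFi} and the H\"older/$L^\infty$ control on $\wpsi$ match the paper's line by line, and your treatment of the ``in particular'' part via \eqref{eq:le} is the intended one.
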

and
\begin{lemma}\label{lem:L}
Assume that $\psi\in C^s(\bO)$ and $(-\Delta)_N^s[\psi] \in C^\beta (\bO)$ for some $\beta>0$. Then
\begin{align*}
\Leps [\psi] \to \L[\psi] = -\kappa(-\Delta)^s  [\wpsi]  \mbox{ strongly in } L^1(\Omega)
\end{align*}
with $\kappa=\frac{\gamma_1}{c_{1,s}}$. 
\end{lemma}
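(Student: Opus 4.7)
The plan is to prove the $L^1(\Omega)$ convergence by combining a far-field/near-field splitting of $\Leps[\psi](x)$ with a dominated convergence argument based on the asymptotic profile of $F_1^\eps$ established in Lemma \ref{lem:estimatesFi}. Using the extension $\wpsi$ and the identity \eqref{eq:L1D}, I would first rewrite
$$\Leps[\psi](x) = \int_\Omega F_1^\eps(y-x)\bigl[\wpsi(y)-\wpsi(x)\bigr]\,dy, \qquad \L[\psi](x) = \gamma_1\,\PV\!\int_\RR \frac{\wpsi(y)-\wpsi(x)}{|y-x|^{1+2s}}\,dy,$$
fix a small cutoff $\delta>0$ to be sent to $0$ after $\eps\to 0$, decompose each integrand according to $|y-x|<\delta$ vs.\ $|y-x|\geq\delta$, and extend the $\Leps$-integral to all of $\RR$ (subtracting the corresponding correction from $\RR\setminus\Omega$, where $\wpsi$ is constant by construction). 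This produces directly comparable expressions.

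For the far-field $|y-x|\geq\delta$, Lemma \ref{lem:estimatesFi} furnishes the quantitative rate $\bigl|F_1^\eps(z)-\gamma_1/|z|^{1+2s}\bigr|\leq C\eps^{2s}/|z|^{1+4s}$, so the $\Leps$-integrand converges uniformly in $x$ to the corresponding piece of the P.V.\ limit. Since $\wpsi\in L^\infty$ and $|z|^{-1-2s}$ is integrable at infinity, dominated convergence passes this limit under the integral; the boundary correction from $\RR\setminus\Omega$ contributes only the outside piece of the target integral, with no P.V.\ needed there since $\wpsi$ is constant on each component.

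The crux is the near-field $|y-x|<\delta$, which must be shown to be small uniformly in $\eps$. The naive estimate using only $\psi\in C^s(\bO)$ fails: plugging $|\wpsi(y)-\wpsi(x)|\leq C|y-x|^s$ into $\int_{|y-x|<\delta}F_1^\eps(y-x)|y-x|^s\,dy$ and rescaling $z=(y-x)/\eps$ produces a divergent prefactor $\eps^{-s}$ (since $s>1/2$). This is where the hypothesis $(-\Delta)_N^s\psi\in C^\beta(\bO)$ enters, through a Hölder regularity estimate of the type \eqref{eq:bdRS} derived from the theory of \cite{RS,RS2}, which upgrades $\wpsi$ to a strictly higher Hölder class locally in $\Omega$. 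Exploiting the evenness of $F_1^\eps$ allows the symmetrization
$$\int_{|y-x|<\delta}F_1^\eps(y-x)[\wpsi(y)-\wpsi(x)]\,dy = \tfrac12\!\int_{|y-x|<\delta}F_1^\eps(y-x)\bigl[\wpsi(y)+\wpsi(2x-y)-2\wpsi(x)\bigr]\,dy$$
(valid on $\RR$ thanks to the constant extension, with only harmless boundary corrections near $\pa\Omega$); the improved regularity then bounds the symmetric second difference by a power of $|y-x|$ large enough that the rescaled integral is controlled uniformly in $\eps$ and of order $\delta^\eta$ for some $\eta>0$. The near-field of $\L[\psi]$ is controlled by the same argument applied to the limit kernel $\gamma_1/|z|^{1+2s}$, making the P.V.\ pointwise well-defined.

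Sending $\eps\to 0$ first yields pointwise a.e.\ convergence together with a uniform $L^1(\Omega)$ envelope, and sending $\delta\to 0$ removes the truncation, giving $\Leps[\psi]\to \L[\psi]$ in $L^1(\Omega)$. The main obstacle is exactly the near-field: one has to simultaneously make sense of the limiting P.V.\ and bound $\Leps[\psi]$ uniformly in $\eps$, and both require genuine regularity of $\wpsi$ beyond $C^s$. Extracting that regularity from $(-\Delta)_N^s\psi\in C^\beta$, via the one-dimensional identification \eqref{eq:L1D} and the Dirichlet regularity theory of \cite{RS,RS2}, is the decisive ingredient and is precisely the reason the new test-function space $X^\beta_0$ works where the $H^2$-based space of \cite{CMP1} did not.
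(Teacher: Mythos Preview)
Your proposal is correct and follows the same route as the paper: a near/far-field split, Lemma~\ref{lem:estimatesFi} for the far field, and the Ros-Oton--Serra weighted H\"older estimate \eqref{eq:bdRS} (deduced from $(-\Delta)^s_N\psi=(-\Delta)^s\wpsi\in C^\beta$) for the near field. The only differences are organizational---the paper takes the cutoff at $|y|<\eps^\alpha$ rather than a fixed $\delta$ (yielding an explicit rate in a single limit) and subtracts the linearization $\wpsi'(x)y$ rather than symmetrizing, which is equivalent for even kernels---and the paper is more explicit where you write ``harmless boundary corrections'': since \eqref{eq:bdRS} carries the weights $d_x^{s-1}$ and $d_{x,y}^{-\beta-s}$, the near-field smallness is obtained only after integrating in $x$, and the paper does this by further splitting into $x+y\in\Omega$ versus $x+y\notin\Omega$ before taking the $L^1(\Omega)$ norm.
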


\begin{proof}[Proof of Proposition \ref{prop:Leps}]
In view of \eqref{def:psieps}, we have
$$\L^\eps[	\psi^\eps] (x)= \L^\eps[ \psi](x) + \le_0  \L^\eps[ \chi ] + \le_1 \L^\eps[ \chi(1-\cdot)](x) .
$$
Lemma \ref{lem:L} implies the convergence of $\L^\eps[ \psi]$, $\L^\eps[ \chi ] $ and $\L^\eps[ \chi(1-\cdot)]$ in $L^1$ and Lemma \ref{lem:DepsCV} gives $
\lim_{\eps\to 0} \le_0 = \lim_{\eps\to0}\le_1 =0.$
The result follows.
\end{proof}

The rest of this section is devoted to the proof of the two lemma.

\begin{proof}[Proof of Lemma \ref{lem:DepsCV}]
We write
$$
 D_\eps^{2s-1}[\psi](x)- \gamma_0 \frac{2s}{c_{1,s}}D_N^{2s-1}[\psi](x)
=  \int_{\RR}    [ \widetilde \psi(x+y)-\psi(x)] y \left[ F_0^\eps(y) - \frac{\gamma_0}{|y|^{1+2s}}\right]\, dy .
$$
Lemma \ref{lem:estimatesFi} gives the following bounds: 
$$
 |y| F^\eps_0\left(y \right)  \leq \frac{C}{|y|^{2s}} \quad \forall y \in\RR,\qquad  \mbox{ and } \qquad
  \left|  |y| F^\eps_0\left(y \right)   - \frac{\gamma_0}{|y|^{2s}}\right| \leq \frac{C\eps^{2s}}{|y|^{4s}} \quad \forall y  >\eps.
$$
We thus have, using the $C^\alpha(\bO)$ regularity of $\psi$, with $\alpha>2s-1$:
\begin{align*}
\left|D_\eps^{2s-1}[\psi](x)- \gamma_0 \frac{2s}{c_{1,s}}D_N^{2s-1}[\psi](x)\right|  & \leq 
C \int_{|y|\leq \eps}   \frac{| \widetilde \psi(x+y)-\psi(x)|}{|y|^{2s}} \, dy
 +C \eps^{2s}\int_{|y|\geq \eps}  \frac{| \widetilde \psi(x+y)-\psi(x)|}{|y|^{4s}}\, dy\\
 & \leq C \eps^{1+\alpha-2s}  + C \eps^{2s} (1+\eps^{1+\alpha-4s})   \\
 & \leq C[ \eps^{2s}+ \eps^{1+\alpha-2s} ].
 \end{align*}
\end{proof}



\begin{proof}[Proof of Lemma \ref{lem:L}]
As noted in the introduction, a crucial observation in this proof is the fact that the condition $(-\Delta)_N^s[\psi] \in C^\beta (\bO)$ implies some H\"older regularity for $\psi$. 
Indeed, since $(-\Delta)^s_N [\psi]= (-\Delta)^s [\wpsi]$, we can use the regularity  theory developed in  \cite{RS} to get the following estimate (we use here the notation of   \cite{RS} for the H\"older norms):
$$
\| \psi\|_{\beta+2s}^{(-s)} \leq C (\| \psi\|_{C^s} + \| (-\Delta)^s_N [\psi] \| _\beta^{(s)} )
$$
where 
$$
\| \psi\|_{\beta+2s}^{(-s)}: = \sup_\Omega d_x ^{-s} u(x) + \sup_\Omega d_x ^{1-s} u'(x) + \sup_{(x,y) \in \Omega^2} d_{x,y}^{\beta+s} \frac{ |\psi'(x) - \psi'(y)|} {|x-y|^{\beta+2s-1}}.
$$
and 
$$\| g\|  _\beta^{(s)} :=    \sup_{(x,y) \in \Omega^2} d_{x,y}^{\beta+s} \frac{ |g(x) - g(y)|} {|x-y|^{\beta}}.
$$
with
$$ d_x = \mathrm{dist}(x,\pa\Omega) , \qquad d_{x,y} = \min(d_x,d_y).$$
We deduce that for any $\psi$ satisfying the conditions of Lemma \ref{lem:L},  we have
\begin{equation}\label{eq:bdRS}
\sup_\Omega d_x ^{-s} u(x) + \sup_\Omega d_x ^{1-s} u'(x) + \sup_{(x,y) \in \Omega^2} d_{x,y}^{\beta+s} \frac{ |\psi'(x) - \psi'(y)|} {|x-y|^{\beta+2s-1}} \leq C.
\end{equation}

We now  recall that 
$$
\Leps [\psi ](x) =  \int_\RR \big( \wpsi (x+y) - \wpsi (x) \big) F^\eps_1(y) \d y
$$
where Lemma \ref{lem:estimatesFi} gives (recall that $ F^\eps_1(y)  = \eps^{-1-2s} F_1(y/\eps) $):
\begin{align*}
F^\eps_1(y) \leq \frac{\gamma_1}{|y|^{1+2s}} \quad \forall y\in\RR\quad \mbox{ and } \quad \quad \left| F_1^\eps(y) - \frac{\gamma_1}{|y|^{1+2s}}\right| \leq \frac{C \eps^{2s}}{|y|^{1+4s}} \quad \forall |y|>\eps.
\end{align*}
For $\alpha \in (0,1)$ (to be chosen later), we thus have (for $x\in \Omega)$):
\begin{align*}
\Leps [\psi](x) - \L[\psi](x)  
& = \int_\RR \big( \wpsi (x+y) - \wpsi(x) \big) \left[F^\eps_1(y) -  \frac{\gamma_1}{|y|^{1+2s}} \right] \d y \\
& = \int_{|y|\leq \eps^\alpha}  \big[ \wpsi (x+y) - \wpsi(x) - \wpsi '(x) y \big] \left[F^\eps_1(y) -  \frac{\gamma_1}{|y|^{1+2s}}\right] \d y \\
&\quad  + \int_{|y|\geq \eps^\alpha} \big[ \wpsi (x+y) - \wpsi(x) \big] \left[F^\eps_1(y) -  \frac{\gamma_1}{|y|^{1+2s}} \right] \d y 
\end{align*}
which yields
$$
| \Leps [\psi](x) - \L[\psi](x)  | \leq \int_{|y|\leq   \eps^\alpha}  \big| \wpsi (x+y) - \wpsi(x) - \wpsi '(x) y \big|\frac{C}{|y|^{1+2s}}  \d y + \int_{|y|\geq  \eps^\alpha} \big|\wpsi (x+y) - \wpsi(x) \big|\frac{C \eps^{2s}}{|y|^{1+4s}} \d y .
$$
The second term is clearly bounded by $\| \psi\|_\infty \eps^{2s-4s \alpha} $, so we can write
$$
\int_\Omega | \Leps [\psi](x) - \L[\psi](x)  |\, dx 
\leq 
\int_\Omega \int_{|y|\leq  \eps^\alpha}  \big| \wpsi (x+y) - \wpsi(x) - \wpsi '(x) y \big|\frac{C}{|y|^{1+2s}}  \d y\, dx + \| \psi\|_\infty \eps^{2s(1-2\alpha)}
$$
and we write the integral in the right hand side as $J^\eps_1+J^\eps_2$ with
$$
J_1^\eps=
\int_\Omega \int_{|y|\leq \eps^\alpha}  \big| \wpsi (x+y) - \wpsi(x) - \wpsi '(x) y \big|\frac{C}{|y|^{1+2s}}  1_{(x+y)\notin \Omega} \d y\, dx $$
$$
J_2^\eps=
\int_\Omega \int_{|y|\leq  \eps^\alpha}  \big| \wpsi (x+y) - \wpsi(x) - \wpsi '(x) y \big|\frac{C}{|y|^{1+2s}}  1_{(x+y)\in \Omega} \d y\, dx .
$$
In order to bound $J_1^\eps$, we note that if $(x+y)\notin \Omega$ then $|y|\geq d_x$. Using \eqref{eq:bdRS}, we deduce
\begin{align*}
 J_1^\eps 
& \leq \int_{d_x\leq  \eps^\alpha} \int_{d_x\leq |y|\leq  \eps^\alpha}  \big[ |\wpsi (x+y) - \wpsi(x)| + | \wpsi '(x) | y \big]\frac{C}{|y|^{1+2s}} 1_{(x+y)\notin \Omega}  \d y\, dx \\
& \leq C \int_{d_x\leq  \eps^\alpha} \int_{d_x<|y|\leq  \eps^\alpha}  \big[ d_x^s  + d_x^{s-1} |y| \big]\frac{C}{|y|^{1+2s}}  \d y\, dx \\
& \leq C \int_{d_x\leq  \eps^\alpha}  d_x^{-s}  \, dx \leq C\eps^{\alpha(1-s)}
\end{align*}
For $J_2^\eps$, we first notice that when $x+y\in\Omega$ we have (using \eqref{eq:bdRS}):
$$|\wpsi (x+y) - \wpsi(x) - \wpsi '(x) y |= \left| \int_0^1 \psi'(x+\tau y) y - \psi'(x)y\, \d \tau \right| 
\leq C \frac{y^{\beta+2s}}{d_{x,x+y}^{\beta+s}}$$
and so
\begin{align*}
J_2^\eps 
& \leq C
\int_\Omega \int_{|y|\leq \eps^\alpha}   \frac{y^{\beta-1 }}{d_{x,x+y}^{\beta+s}}1_{(x+y)\in \Omega}  \d y\, dx \leq C
 \int_{|y|\leq \eps^\alpha} y^{\beta-1 }    \int_\Omega d_{x,x+y}^{-\beta-s} 1_{(x+y)\in \Omega}  \, dx \d y 
\end{align*}
As long as $\beta+s<1$, we have $ \int_\Omega d_{x,x+y}^{-\beta-s} 1_{(x+y)\in \Omega}  \, dx <\infty$ and so
$$ J_2^\eps \leq  C
 \int_{|y|\leq  \eps^\alpha} y^{\beta-1 }   \d y \leq C \eps^{\beta\alpha}
$$

We have thus proved (provided $0<\beta<1-s$):
$$
\int_\Omega | \Leps [\psi](x) - \L[\psi](x)  |\, dx 
\leq C[ \eps^{2s(1-2\alpha)} + \eps^{\beta\alpha} +C\eps^{\alpha(1-s)}]
$$
and the result follows by taking $\alpha \in (0,1/2)$.
\end{proof}

\subsection{Convergence of $\phi^\eps$}
Finally, in order to pass to the limit in the remaining terms in \eqref{eq:weakeps}, we need the convergence of $\phi^\eps$ and $\pa_t\phi^\eps$:
\begin{lemma} \label{lem:CVphieps}
Consider $\psi \in C^0_c([0,\infty);C^\alpha(\bO))$ with $\alpha>2s-1$ such that $D^{2s-1}_N[\psi]=0$ on $\pa\Omega\times (0,\infty)$. Then 
\begin{align*}
\lim_{\eps \to 0} \iiint_{\RR_+\times\Omega\times\RR} |\phi^\eps -  \psi^\eps |^2 F(v) \d v \d x \d t = 0 .
\end{align*}
If  $\psi \in C^1_c([0,\infty);C^\alpha(\bO))$, with $\alpha>2s-1$ then 
\begin{align*}
\lim_{\eps \to 0} \iiint_{\RR_+\times\Omega\times\RR} |\pa_t \phi^\eps -  \pa_t \psi^\eps |^2 F(v) \d v \d x \d t = 0 
\end{align*}

\end{lemma}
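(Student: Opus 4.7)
The plan is to exploit the explicit representation \eqref{lephieps} together with the H\"older regularity of $\psi^\eps$ to obtain a pointwise bound on $\phi^\eps - \psi^\eps$, and then integrate against $F(v)\,\d v$ using the heavy-tail decay \eqref{def:F}. Since $\int_0^\infty \nu_0 e^{-\nu_0 z}\,\d z = 1$, formula \eqref{lephieps} gives
$$
\phi^\eps(t,x,v) - \psi^\eps(t,x) = \int_0^\infty \nu_0 e^{-\nu_0 z}\big[\wpsi^\eps(t,x+\eps z v) - \wpsi^\eps(t,x)\big]\,\d z.
$$
By Lemma \ref{lem:DepsCV}, $\le_0, \le_1 \to 0$, so \eqref{def:psieps} shows that $\{\psi^\eps\}$ is uniformly bounded in $C^0_c([0,\infty); C^\alpha(\bO))$; extending by constant preserves H\"older regularity, so $|\wpsi^\eps(t,y) - \wpsi^\eps(t,x)| \leq C|y-x|^\alpha$ uniformly in $\eps$.

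Combining the H\"older bound with the trivial $L^\infty$ bound and splitting the $z$-integral at $z = 1/(\eps|v|)$, I would obtain
$$
|\phi^\eps(t,x,v) - \psi^\eps(t,x)| \leq C \int_0^\infty \nu_0 e^{-\nu_0 z} \min\bigl(1, (\eps z |v|)^\alpha\bigr)\,\d z \leq C \min\bigl(1, (\eps|v|)^\alpha\bigr).
$$
Squaring and integrating against $F(v)\,\d v$, then splitting at $|v| = 1/\eps$ and using $F(v) \leq C|v|^{-1-2s}$ for $|v| \geq 1$ gives a bound of the form $C\eps^{2\min(\alpha,s)}$ (with a logarithmic factor in the borderline case $\alpha = s$), which tends to $0$. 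Since $\psi$ is compactly supported in $t$ and $\Omega$ is bounded, integrating in $t$ and $x$ concludes the first claim.

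For the second claim, the key observation is that the construction \eqref{def:psieps}--\eqref{eq:le} is linear in $\psi$ and that $D^{2s-1}_\eps$ commutes with $\pa_t$. Consequently $\pa_t \psi^\eps = (\pa_t \psi)^\eps$, and differentiating under the integral in \eqref{lephieps} shows that $\pa_t \phi^\eps$ is exactly the function produced by \eqref{lephieps} applied to $\pa_t \psi$. Since $\psi \in C^1_c([0,\infty); C^\alpha(\bO))$ gives $\pa_t \psi \in C^0_c([0,\infty); C^\alpha(\bO))$ with $\alpha > 2s-1$, the first part applies verbatim with $\psi$ replaced by $\pa_t \psi$, yielding the desired convergence.

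The main (mild) obstacle is the integration against $F(v)$: since $F$ has only algebraic decay $|v|^{-1-2s}$, one cannot bound $|\phi^\eps - \psi^\eps|^2$ simply by a constant times $(\eps v)^{2\alpha}$ uniformly in $v$ -- the truncation at $|v| \sim 1/\eps$ is essential, and it is the interplay between $\alpha$ (from the regularity of $\psi$) and $s$ (from the tail of $F$) that drives the rate. The hypothesis $\alpha > 2s-1$ is used only to invoke Lemma \ref{lem:DepsCV} so that the $\le_i$ correction terms vanish in the limit; any positive $\alpha$ would suffice for the core kernel estimate.
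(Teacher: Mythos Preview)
Your proof is correct but proceeds by a genuinely different route from the paper. The paper's argument is soft: it uses Jensen's inequality to write
\[
|\phi^\eps(t,x,v)-\psi^\eps(t,x)|^2 \leq \int_0^\infty \nu_0 e^{-\nu_0 z}\,|\wpsi^\eps(t,x+\eps z v)-\psi^\eps(t,x)|^2\,\d z,
\]
then observes pointwise convergence of $\int_\Omega |\wpsi^\eps(t,x+\eps z v)-\psi^\eps(t,x)|^2\,\d x \to 0$ for each fixed $(t,v,z)$ and dominates by $\|\psi\|_{L^\infty}^2 F(v)\,\nu_0 e^{-\nu_0 z}$, concluding by Lebesgue's dominated convergence theorem. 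No H\"older regularity of $\psi$ is invoked in the core of that argument; the hypothesis $\alpha>2s-1$ enters only, as you correctly note, through Lemma \ref{lem:DepsCV} to make $\le_0,\le_1\to0$.

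Your approach, by contrast, exploits the $C^\alpha$ regularity of $\wpsi^\eps$ to obtain the quantitative pointwise bound $|\phi^\eps-\psi^\eps|\leq C\min(1,(\eps|v|)^\alpha)$ and then integrates against $F(v)$ by balancing the H\"older exponent $\alpha$ against the tail exponent $2s$, yielding an explicit rate $O(\eps^{2\min(\alpha,s)})$. This buys you a convergence rate that the paper's dominated-convergence argument does not provide, at the price of a short extra computation. Your treatment of the second claim via $\pa_t\psi^\eps=(\pa_t\psi)^\eps$ is also cleaner than the paper's ``proved similarly'' remark, and is justified because the coefficients $\le_i$ are linear in $D^{2s-1}_\eps[\psi](0), D^{2s-1}_\eps[\psi](1)$ with $\eps$-dependent but $t$-independent denominators.
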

\begin{proof}
First, we note that
$$
| \phi^\eps (t,x,v)-\psi^\eps(t,x)|^2  \leq  \int_0^\infty  \nu_0 e^{-\nu_0z}|\wpsi^\eps(t,x+\eps z v) -\psi^\eps(t,x)|^2 \, dz 
$$
and so (with $T$ such that $\psi(t)=0$ for $t\geq T$):
\begin{align*}
&\iiint_{\RR_+\times\Omega\times\RR} |\phi^\eps-  \psi^\eps |^2 F(v) \d v \d x \d t \\
&\qquad  \leq \iiint_{(0,T)\times\Omega\times\RR}   \int_0^\infty  \nu_0 e^{-\nu_0z}|\wpsi(t,x+\eps z v) -\psi^\eps(t,x)|^2 F(v) \, dz \d v \d x \d t \\
&\qquad  \leq \iint_{(0,T)\times\RR}   \int_0^\infty  \int_\Omega |\wpsi^\eps(t,x+\eps z v) -\psi^\eps(t,x)|^2 \, dx F(v)\nu_0 e^{-\nu_0z}\ \, dz \d v  \d t 
\end{align*}
Then, we note that (recall that $\le_0,\le_1\to 0$ by Lemma \ref{lem:DepsCV}):
$$\lim_{\eps\to 0}\int_\Omega |\wpsi^\eps(t,x+\eps z v) -\psi^\eps(t,x)|^2 \, dx =0 \quad \mbox{ for all } t,v,z$$
and
$$\int_\Omega |\wpsi(t,x+\eps z v) -\psi^\eps(t,x)|^2 \, dx F(v)\nu_0 e^{-\nu_0z}\leq \| \psi\|_{L^\infty}^2  F(v)\nu_0 e^{-\nu_0z} \in L^1 ((0,T)\times\RR\times\RR)$$
Lebesgue dominated convergence theorem implies the result.

The second limit is proved similarly (note that $t$ is a parameter).
\end{proof}

\subsection{Proof of Theorem \ref{thm:main}}
\begin{proof}[Proof of Theorem \ref{thm:main}]
We are now ready to prove our main result.
\paragraph{A priori estimates.} We have the following classical lemma:
\begin{lemma} \label{lem:apriori}
Let $f_{in}$ be in $L^2_{F^{-1}}(\Omega\times\RR)$. The weak solution $f^\eps$ of \eqref{eq:rescaledKinetic}
 is bounded in $ L^\infty(0,\infty; L^2_{F^{-1}}(\Omega\times\RR^N))$ and 
 satisfies, up to a subsequence
\begin{equation}\label{eq:flim}
f^\eps \rightarrow \rho(t,x) F(v) \quad \text{ weakly in } L^\infty(0,\infty; L^2_{F^{-1}}(\Omega\times\RR))
\end{equation}
where $\rho(t,x)$ is the weak limit of $\rho^\eps (t,x) = \int_{\RR} f^\eps \d v $.
Assume furthermore that $f_{in}(x,v)\leq CF(v)$ for some constant $C$.
Then  $f^\eps(t,x,v) \leq CF(v)$ and 
\begin{equation}\label{eq:rholim} \rho^\eps(t,x) \rightharpoonup \rho(t,x) \quad  L^\infty(\RR_+\times\Omega\times\RR)
 \star-\mbox{weak.} 
\end{equation}
\end{lemma}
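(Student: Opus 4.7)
The standard approach is to derive an energy identity by multiplying the rescaled equation \eqref{eq:rescaledKinetic} by $f^\eps/F$ and integrating over $\Omega\times\RR$. The collision term on the right-hand side contributes the nonnegative dissipation $\nu_0 D^\eps := \nu_0 \iint_{\Omega\times\RR} \bigl(f^\eps/\sqrt F - \rho^\eps\sqrt F\bigr)^2\, dv\, dx$ by Cauchy--Schwarz, while the transport term produces the boundary flux $\frac{\eps}{2}\int_{\pa\Omega}\int_\RR (v\cdot n)\,(f^\eps)^2/F\, dv\, d\sigma$. The key check is that the diffuse reflection condition \eqref{eq:BCdiff} makes this boundary flux nonnegative at each boundary point: writing the ingoing contribution using the definition of $\mathcal{B}$, it factors as $c_0^2\bigl(\int \gamma_+ f^\eps\,|w\cdot n|\,dw\bigr)^2 \int_{v\cdot n<0} |v\cdot n| F(v)\,dv$, and one Cauchy--Schwarz together with the definition \eqref{def:c0} of $c_0$ dominates it by the outgoing flux. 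The resulting identity reads
\begin{equation*}
\frac{\eps^{2s}}{2}\frac{d}{dt}\|f^\eps(t)\|_{L^2_{F^{-1}}}^2 + \nu_0 D^\eps + (\text{nonneg.\ boundary term}) = 0,
\end{equation*}
which yields $\|f^\eps\|_{L^\infty(0,\infty;L^2_{F^{-1}})} \leq \|f_{in}\|_{L^2_{F^{-1}}}$ and, after integration in time, the crucial dissipation bound $\int_0^\infty D^\eps\, dt \leq \eps^{2s}\|f_{in}\|_{L^2_{F^{-1}}}^2/(2\nu_0)$.

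From the first estimate I extract a subsequence with $f^\eps \rightharpoonup \bar f$ weakly-$\star$ in $L^\infty(0,\infty;L^2_{F^{-1}})$, and consequently $\rho^\eps \rightharpoonup \rho := \int \bar f\, dv$. The second estimate forces $f^\eps/\sqrt F - \rho^\eps\sqrt F \to 0$ strongly in $L^2(\RR_+\times\Omega\times\RR)$, which identifies $\bar f = \rho\, F$ almost everywhere and establishes \eqref{eq:flim}.

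For the $L^\infty$ bound under the additional assumption $f_{in}\leq CF$, I would invoke a comparison principle applied to $g^\eps := f^\eps/F$, which solves $\eps^{2s}\pa_t g^\eps + \eps v\,\pa_x g^\eps = \nu_0\bigl(\int g^\eps F\,dw - g^\eps\bigr)$. The constant function $g\equiv C$ is a stationary supersolution of this equation (the right-hand side vanishes since $\int F\,dw = 1$) and is compatible with the diffuse reflection boundary condition since $c_0 C \int F|w\cdot n|\,dw = C$ by definition of $c_0$. Since linear kinetic equations with diffuse reflection admit a maximum principle, one concludes $f^\eps \leq C F$ pointwise, hence $\rho^\eps \leq C \int F\, dv = C$ uniformly in $\eps$, and \eqref{eq:rholim} then follows by extracting a further subsequence converging weakly-$\star$ in $L^\infty$. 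The only nonroutine step is verifying the nonnegativity of the boundary flux in the energy identity, where the structure of the diffuse reflection operator --- in particular the mass-conservation identity encoded in $c_0$ --- plays a decisive role; everything else reduces to classical weak compactness and comparison arguments for linear kinetic equations.
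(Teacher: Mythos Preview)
Your argument is correct and follows the same classical line the paper invokes: the paper's own proof simply defers the energy/dissipation estimate and weak compactness to \cite{CMP1}, and for the $L^\infty$ bound it applies positivity preservation to $CF(v)-f^\eps$, which is precisely your comparison argument written in equivalent form. The only remark is that your boundary-flux computation and dissipation identification fill in exactly the details the paper chose to omit by citation.
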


\begin{proof}
We do not prove the first part of the lemma which is classical (see for instance Lemma 2.1 in \cite{CMP1}).
\medskip

For the second part, we note that when $f_{in}(x,v)\leq CF(v)$, the function $(t,x,v)\mapsto CF(v)-f^\eps(t,x,v)$ is a solution of \eqref{eq:rescaledKinetic} with non-negative initial data and thus is thus non-negative for all time. This implies $f^\eps(t,x,v)\leq CF(v)$ and so $\rho^\eps(t,x)\leq C.$
\end{proof}

\paragraph{Convergence to a solution of the asymptotic problem.}
Given a test function $\psi\in X^\beta_0$, 
we can now pass to the limit in the weak formulation \eqref{eq:weakeps}, which we recall here:
$$
 \iiint_Q f^\eps \pa_t \phi^\eps \d t \d x \d v + \iint_{\Omega\times\RR}  f_{in}(x,v) \phi^\eps(0,x,v) \d x \d v  = -  \iint_{\Omega\times\RR}  \rho_\eps\L^\eps[\psi^\eps] (x) \d t \d x .
$$
For any subsequence along which \eqref{eq:flim} holds,  Proposition \ref{prop:Leps}, Lemma \ref{lem:CVphieps}  (both of which apply since $\psi\in X^\beta_0$) and \eqref{eq:rholim}  allow us to take the limit, proving that the limiting density $\rho(t,x)$ satisfies \eqref{eq:weakFracNeumann}.

The fact that the whole sequence converges then follows from the uniqueness of the limit $\rho$ given by Proposition \ref{prop:unique} which we prove below.
This completes the proof of Theorem \ref{thm:main}.
\end{proof}

\subsection{ Proof of Proposition \ref{prop:unique}}
\begin{proof}[Proof of Proposition \ref{prop:unique}]

To prove the existence, we simply have to show that the weak solution $u(t,x)$ provided by Theorem \ref{thm:evolution} satisfies \eqref{eq:weakFracNeumann} for appropriate test functions.
We recall that $u \in C^0([0,\infty);L^2(\Omega))\cap C^1((0,\infty);L^2(\Omega))\cap C^0((0,\infty);D((-\Delta)_N^s))$
and
that in dimension one, the condition $(-\Delta)_N^s u\in C^0((0,\infty); L^2(\Omega))$ implies that $(t,x)\mapsto D_N^{2s-1}[u](t,x) $ is continuous (so the Neumann condition is satisfied in the classical sense).
Using integration by parts and \eqref{eq:symmetry}, it is easy to check that $u$ satisfies
$$
\underset{\RR^+\times\Omega} {\iiint}  u  \big( \pa_t \psi - \kappa  \,(-\Delta)_N^s \psi \big) \d t\d x + \underset{\Omega}{\iint} \rho_{in} (x) \psi(0,x) \d x =0 
$$
for all test function $\psi \in X^\beta_0$.

\medskip

Let now $\rho_1(t,x)$ and $\rho_2(t,x)$ be two functions satisfying \eqref{eq:weakFracNeumann} for appropriate test functions.
The function $\bar\rho =\rho_1-\rho_2 \in L^\infty(0,\infty;L^2(\Omega))$ satisfies
\begin{equation} \label{eq:weakFracNeumannuni}
\underset{\RR^+\times\Omega} {\iiint} \bar \rho(t,x) \Big(  \pa_t \psi  -\kappa \,(-\Delta)_N^s \psi  \Big)(t,x) \d t\d x =0 
\end{equation}
for all test function $\psi \in X^\beta_0$.
Given a smooth test function $g\in \mathcal D(\Omega)$ and $\lambda >0$, we let $\phi(x)$ be the weak solution of 
$$
\left\{
\begin{array}{ll}
\lambda \phi + \kappa\, (-\Delta)_N^s \phi = g & \mbox{ in } \Omega, \\
D_N^{2s-1} [\phi](x) \cdot n(x) =0 & \mbox{ for all } x\in \pa\Omega
\end{array}
\right.
$$
given by Theorem \ref{thm:weak} and we define
$$\psi(t,x) = e^{-\lambda t } \phi(x).$$
We need to check that we can take this function $\psi$ as test function in \eqref{eq:weakFracNeumannuni}: 

First, the maximum principle (for the Neumann boundary value problem) implies that $\| \phi\|_{L^\infty} \leq C\| g\|_{L^\infty}$. 
Next, we note that the extension $\tilde \phi$ solves 
$ (-\Delta)^s \tilde \phi = g -\lambda \phi$ in $\Omega$ with $\tilde \phi$ constant in $\RR\setminus \Omega$.
Standard regularity theory for the fractional Dirichlet boundary value problem (Proposition \ref{prop:reg}) implies that $\tilde \phi \in C^s(\RR)$ and so $\phi \in C^s(\overline\Omega)$.
In turns, this implies that 
$(-\Delta)_N^s\phi = (-\Delta)^s \tilde \phi  \in C^s(\overline\Omega)$. It is now easy to see that $\psi \in X^\beta_0$.

Using the fact that $\pa_t \psi(t,x) -\kappa (-\Delta)_N^s [\psi](t,x) = -e^{-\lambda t} g(x)$, it follows from \eqref{eq:weakFracNeumannuni} that
$$
\int_\Omega  \int_0^\infty \bar \rho(t,x)\, e^{-\lambda t}\, dt g(x)\, dx =0.
$$
Since this holds for all $g\in \mathcal D(\Omega)$, we deduce
$$ \int_0^\infty \bar \rho(t,x)\, e^{-\lambda t}\, dt  = 0 \quad \mbox{ in $\Omega$, for all } \lambda>0$$
and taking the inverse Laplace transform implies that $\bar \rho(t,x) =0$ in $\RR_+\times\Omega$.

\medskip

\end{proof}

\section{Optimal regularity for the elliptic problem: Proof of Proposition \ref{prop:reg}}

In this section, we are interested in the optimal regularity of the solutions to 
\begin{equation} \label{eq:elliptic} 
\left\{ \begin{aligned} 
& u+(-\Delta)_N^s u = g &\quad \mbox{ in } \Omega \\
& D_N^{2s-1} [u](x) = 0 & \quad \mbox{ on } \dO
\end{aligned} \right. 
\end{equation} 
with $\Omega = (0,1)$ and $g\in C^\infty(\Omega)$. As stated before, the extension $\tilde{u}$ solves $\tilde u+\Dels \tilde{u}= g$ and is constant outside $\Omega$ so the regularity theory for the fractional Dirichlet boundary value problems ensures that $u \in C^s(\bO)$. We will show that this regularity is optimal by constructing a solution of \eqref{eq:elliptic} which behaves like $\mbox{dist}(x,\dO)^s$ close to the boundary.

First, we recall that $v:x\mapsto \kappa_s x_+^s (1-x)_+^s$ is an explicit solution to 
\begin{equation*}
\left\{ \begin{aligned} 
& (-\Delta)^s v = 1 \quad & \mbox{ in } \Omega\\
& v = 0 & \mbox{ in } \RR\setminus\Omega 
\end{aligned} \right. 
\end{equation*} 
with $\Omega=(0,1)$ and the proper choice of constant $\kappa_s>0$, see e.g. \cite{RS}. 
Of course, this function does not satisfies $D^{2s-1}_N[v]=0$ on $\pa\Omega$.

We thus consider two smooth functions $\psi_0$ and $\psi_1$ with compact support in $(1/4,3/4)$ and such that for $i\in\{1,2\}$ and $x\in\dO$: $D^{2s-1}_N [\psi_i] (x) \neq 0$ and 
\begin{align*}
D^{2s-1}_N[ \psi_0] (0) D^{2s-1}_N[\psi_1](1) - D^{2s-1}_N[\psi_0](1)D^{2s-1}_N[ \psi_1](0) \neq 0. 
\end{align*}
We then define the function $u$ as 
\begin{align} \label{eq:counterexple}
u(x) = \kappa_s x_+^s (1-x)_+^s + \lambda_0 \psi_0(x) + \lambda_1 \psi_1(x) 
\end{align}
where $\kappa$ is a positive constant and the $\lambda_i$ are defined, similarly to Proposition \ref{prop:psieps}, as
\begin{equation*} 
	\left\{ \begin{aligned}
		\lambda_0 = \frac{D^{2s-1}_N[\kappa_s x_+^s (1-x)_+^s](0) D^{2s-1}_N[\psi_1](1) + D^{2s-1}_N[\kappa_s x_+^s (1-x)_+^s](1)D^{2s-1}_N[\psi_1](0)}{D^{2s-1}_N [\psi_0] (0) D^{2s-1}_N[\psi_1](1) - D^{2s-1}_N[ \psi_0](1) D^{2s-1}_N[\psi_1](0)} ,\\
		\lambda_1 = \frac{D^{2s-1}_N[\kappa_s x_+^s (1-x)_+^s](1) D^{2s-1}_N[\psi_0](0) +D^{2s-1}_N[\kappa_s x_+^s (1-x)_+^s](0)D^{2s-1}_N[\psi_0](1)}{D^{2s-1}_N[\psi_0] (0) D^{2s-1}_N[\psi_1](1) - D^{2s-1}_N [\psi_0](1) D^{2s-1}_N [\psi_1](0)}. 
	\end{aligned} \right.
\end{equation*}
Note that $x\to x_+^s (1-x)_+^s\in C^s(\bO)$ hence $D^{2s-1}_N [\kappa_s x_+^s (1-x)_+^s]$ is continuous on $\bO$ and the boundary values exist. This choice of $\lambda_i$ and the linearity of $D^{2s-1}_N$ implies naturally 
\begin{align*}
D^{2s-1}_N [u](x) = 0, \qquad \forall x\in\dO.
\end{align*}

Finally,  $u$ is a solution of \eqref{eq:elliptic} with right hand side
\begin{align*}
g(x) =  u+ 1 + \lambda_0 (-\Delta)^s_N \psi_0 + \lambda_1 (-\Delta)^s_N \psi_1 
\end{align*}
and by assumption on the support of $\psi_i$ we have $ (-\Delta)^s_N \psi_i=(-\Delta)^s \widetilde{\psi}_i  \in C^\infty(\Omega)$ and in particular $g\in L^\infty(\Omega)$. 

We have thus built a solution $u$ of \eqref{eq:elliptic} with $g\in L^\infty(\Omega)$, which behaves like $\mbox{dist}(x,\dO)^s$ when $x\to 0$ and $x\to 1$, which completes the proof.



\bibliographystyle{siam}
\bibliography{bibliography.bib}

\end{document}